\newtheorem{theorem}{Theorem}
\newtheorem{proposition}[theorem]{Proposition}
\newtheorem{lemma}[theorem]{Lemma}
\newtheorem{example}[theorem]{Example}
\definecolor{darkgreen}{rgb}{0,0.6,0}
\providecommand{\F}{\mathbb{F}}
\providecommand{\N}{\mathbb{N}}
\providecommand{\Z}{\mathbb{Z}}
\DeclareMathOperator{\PG}{PG}
\newcommand{\cM}{\mathcal{M}}
\newcommand{\cS}{\mathcal{S}}
\title{Lengths of divisible codes -- the missing cases}
\author{Sascha Kurz}
\date{}
\affil{Mathematisches Institut, Universit\"at Bayreuth, D-95440 Bayreuth, Germany, sascha.kurz@uni-bayreuth.de}
\begin{document}

\maketitle

\begin{abstract}
  A linear code $C$ over $\mathbb{F}_q$ is called $\Delta$-divisible if the Hamming weights $\operatorname{wt}(c)$ of all codewords $c \in C$ are 
  divisible by $\Delta$. The possible effective lengths of $q^r$-divisible codes have been completely characterized for each prime power $q$ and 
  each non-negative integer $r$ in \cite{kiermaier2020lengths}. The study of $\Delta$-divisible codes was initiated by Harold Ward \cite{ward1981divisible}. 
  If $t$ divides $\Delta$ but is coprime to $q$, then each $\Delta$-divisible code $C$ over $\F_q$ is the $t$-fold repetition of a $\Delta/t$-divisible 
  code. Here we determine the possible effective lengths of $p^r$-divisible codes over finite fields of characteristic $p$, where $r\in\mathbb{N}$ but $p^r$ is 
  not a power of the field size, i.e., the missing cases.
  
  \smallskip
  
  \noindent
  \textbf{Keywords:} Divisible codes, linear codes, Galois geometry\\
  \textbf{Mathematics Subject Classification:} 51E23 (05B40)
\end{abstract}

\section{Introduction}

A linear code $C$ over $\mathbb{F}_q$ is called $\Delta$-divisible if the Hamming weights $\operatorname{wt}(c)$ of all codewords $c \in C$ are 
divisible by $\Delta$. The study of divisible codes was initiated by Harold Ward \cite{ward1981divisible}. Linear codes meeting the Griesmer bound 
in many cases have to admit a relatively large divisibility constant $\Delta$, see \cite{ward1998divisibility}. In order to state a      
connection between divisible codes and Galois geometries we associate each subspace $U\in\operatorname{PG}(v-1,q)$ with its characteristic function 
$\chi_U$ mapping each point $\operatorname{PG}(v-1,q)$ to a non-negative integer multiplicity, i.e., $\chi_U(P)=1$ iff $P\le U$ and $\chi_U(P)=0$ otherwise. 
We say that a mapping $\cM$ from the point set of $\operatorname{PG}(v-1,q)$ to $\mathbb{N}$ is $\Delta$-divisible if the corresponding linear 
code $C_{\cM}$ associated with the multiset of points characterized by $\cM$ is $\Delta$-divisible. We call $\cM(P)$ the multiplicity of a point $P$ and 
extend this notion additively to arbitrary subspaces $S$ by letting $\cM(S)$ be the sum over all point multiplicities $\cM(P)$ where $P$ is contained in $S$. 
If $S$ is the entire ambient space then we speak of the cardinality $\#\cM$. Using this notion, we can more directly state that a multiset of points $\cM$ in 
$\PG(v-1,q)$ is $\Delta$-divisible iff we have $\# \cM\equiv \cM(H)\pmod\Delta$ for every hyperplane $H$. The effective length of $C_{\cM}$ equals the cardinality 
$\#\cM$. We say that a multiset $M$ of points is spanning if the points with positive multiplicity span the entire ambient space. Using the geometric language 
we will call $1$-, $2$-, $3$-, and $(v-1)$-dimensional subspaces points, lines, planes, and hyperplanes, respectively.  

In e.g.\ \cite[Lemma 11]{kiermaier2020lengths} it was shown that 
for each multiset of subspaces $\mathcal{U}$ in $\operatorname{PG}(v-1,q)$ that have dimensions at least $k$ the multiset of points $\chi_{\mathcal{U}}:= 
\sum_{U\in\mathcal{U}} \chi_U$ is $q^{k-1}$-divisible. Since also the complementary multiset of points is $q^{k-1}$-divisible, non-existence results for 
$q^{k-1}$-divisible codes imply upper bounds on the maximum size of $k$-spreads, see e.g.\ \cite{honold2018partial} for more details. Similarly, non-existence 
results for so-called vector space partitions, i.e., set of subspaces partitioning the point set of $\PG(v-1,q)$, can be deduced from certain non-existence 
results for $q^r$-divisible codes, see e.g.\ \cite{kurz2022vector}. Also in the situation where some points can be contained in several subspaces non-existence results for $q^{k-1}$-divisible 
codes can be applied to deduce results for problems in Galois geometry. For constant-dimension codes, i.e., sets of $k$-dimensional subspaces of $\PG(v-1,q)$ 
such that the dimensions of the pairwise intersections are upper bounded by some integer, they can be utilized for upper bounds on the cardinality, see 
e.g.\ \cite[Theorem 12]{kiermaier2020lengths}. For similar bounds for mixed-dimension subspace codes, where the codewords can have different dimensions, we 
refer to \cite{honold2019johnson}. Two surveys on applications of divisible codes are given by \cite{kurz2021divisible,ward2001divisible}.


The possible (effective) lengths of $q^r$-divisible codes have been completely characterized for each prime power $q$ and each non-negative integer $r$ in 
\cite[Theorem 1]{kiermaier2020lengths}. An important structure result for $\Delta$-divisible codes $C$ over $\F_q$ was shown in \cite{ward1981divisible}: If $t\in \N$
divides $\Delta$ and is coprime to $\Delta$ then there exists a $\Delta/t$-divisible code $C'$ over $\F_q$ such that $C$ is the $t$-fold repetition of $C'$. 
So, it suffices to study the possible (effective) lengths of $p^e$-divisible codes over $\F_q$, where $p$ is the characteristic of the field and $e$ an integer. 
When $q$ is not a prime the characterization result from \cite{kiermaier2020lengths} does not give an answer for the cases when the divisibility constant 
$\Delta$ is not a power of the field size (but only its characteristic). Here we close this gap and state a corresponding characterization of the possible 
(effective) lengths in Theorem~\ref{main_thm}.   

A few general (and easy) constructions for $\Delta$-divisible multisets of points $\cM$ in $\PG(v-1,q)$ are known, see e.g.\ \cite{kiermaier2020lengths} for proofs:
\begin{itemize}
  \item[(1)] if a multiset of points $\cM$ in $\PG(v-1,q)$ is $\Delta$-divisible, then there exists an embedding $\cM'$ of $\cM$ in $\PG(v'-1,q)$ for each $v'\ge v$ that is also $\Delta$-divisible;
  \item[(2)] if multisets of points $\cM,\cM'$ in $\PG(v-1,q)$ are $\Delta$-divisible with cardinalities $n,n'$, then $\cM+\cM'$ is $\Delta$-divisible with cardinality $n+n'$ in $\PG(v-1,q)$;  
  \item[(3)] if a multiset of points $\cM$ in $\PG(v-1,q)$ is $\Delta$-divisible with cardinalities $n$, then $c\cdot \cM$ is $c\cdot\Delta$-divisible with cardinality $cn$ for each positive integer $c$;
  \item[(4)] for each integer $u\ge 1$ and each $u$-dimensional subspace $U$ in $\PG(v-1,q)$ the corresponding characteristic function $\chi_U$ is $q^{u-1}$-divisible with cardinality $\tfrac{q^u-1}{q-1}$. 
\end{itemize}  
Here we are only interested in the possible cardinalities of $\Delta$-divisible multisets of points and not in their dimensions, see (1). However, in some applications 
there are restrictions on the dimension, so that the determination of the minimum dimension of a divisible code of given length remains an interesting open problem. Assume $q=p^e$ for a 
prime $p$ and an integer $e$. With this we will use the parameterization $\Delta=p^{ae-b}$ where $a,b\in\N$ with $a\ge 1$ and $b\le e-1$. By (4) an $(a+1)$-dimensional subspace is $q^a$-divisible. 
Since $ae \ge ae-b$ it is also $p^{ae-b}$-divisible. For $1 \le i \le a$ we can consider an $p^{ie-b}$-fold $(a-i+1)$-dimensional subspace which is $p^{ae-b}$-divisible using (3). Let us 
denote the corresponding cardinalities by $s_q (a, b, i)$, where $1\le i\le a$, and write $s_q(a,b,0)$ for the cardinality of an $(a+1)$-dimensional subspace. Using (2) we conclude
that for each $c_0,\dots,c_a\in\mathbb{N}$ there exists a $p^{ae-b}$-divisible multiset of points of cardinality
$$
  n=\sum_{i=0}^a c_i \cdot s_q(a,b,i)
$$
in $\PG(v-1,q)$ for sufficiently large dimension $v$ of the ambient space. Our main theorem, see Theorem~\ref{main_thm}, will state that for other cardinalities there is no $p^{ae-b}$-divisible 
multiset of points and we will give a direct characterization of the attainable cardinalities, i.e., we solve the so-called Frobenius coin problem for the {\lq\lq}coin values{\rq\rq} $s_q(a,b,0),\dots, 
s_q(a,b,a)$. For a solution of the Frobenius coin problem for geometric sequences we refer to \cite{ong2008frobenius}. 

\medskip

The remaining part of this paper is structured as follows. In Section~\ref{sec_main_thm} we prove our main theorem and in Section~\ref{sec_projective} we consider the possible cardinalities of $\Delta$-divisible 
sets of points. In the latter section we can only state a few numerical results and leave the general problem widely open. We especially study $2$-divisible sets of points and obtain a few preliminary results. 
Related results can be found in the literature under the terms of sets of odd and of even type, see e.g.\ \cite{adriaensen2023note,hirschfeld1980sets,key1998small,limbupasiriporn2010small,sherman1983sets,
tanaka2013classification,weiner2014stability}. 

\section{The generalized theorem}
\label{sec_main_thm}
For each integer $i\ge 1$ we define $[i]_q:=\tfrac{q^i-1}{q-1}$, i.e., the number of points of an $i$-dimensional subspace. For each prime power $q$ we write $q=p^e$, where $p$ is the characteristic 
of $\F_q$. When we consider $\Delta$-divisible codes over $\F_q$ we assume that $\Delta$ is a power of $p$. More concretely, we will use the parameterization $\Delta=p^{ae-b}$ where 
$a,b\in\N$ with $a\ge 1$ and $b\le e-1$. Sometimes we will also use $f:=ae-b$, i.e., the exponent in $\Delta=p^f$. For a fixed prime power $q=p^e$, non-negative integers $a,b$ with $a\ge 1$,
$b\le e-1$, and $i\in\{0,\dots,a\}$ we define
\begin{equation}
  s_q(a,b,i):=[a+1]_q
\end{equation}
if $i=0$ and
\begin{equation}
  s_q(a,b,i):=q^i\cdot [a-i+1]_q/p^b= p^{ie-b}\cdot [a-i+1]_q=p^{e-b}\cdot\left(q^{i-1}+q^i+\dots+q^{a-1}\right)
\end{equation}
for $1\le i\le a$. Note that for $i\ge 1$ the number $s_q(a,b,i)$ is divisible by $p^{ie-b}$ but not by $p^{ie-b+1}$, where $ie-b\ge 1$, and $s_q(a,b,0)$ is coprime to $p$. This property allows 
us to create kind of a positional system upon the sequence of base numbers
$$
  S_q(a,b) := \big(s_q(a,b,0),s_q(a,b,1),\dots,s_q(a,b,a)\big).
$$  
As it can be easily shown, each integer $n$ has a unique \emph{$S_q(a,b)$-adic expansion}
\begin{equation}
  n =\sum_{i=0}^a c_i \cdot s_q(a,b,i)
\end{equation}
with $c_0 \in \left\{0,\dots,p^{e-b}-1\right\}$,  $c_1,\dots,c_a-1 \in \{0\dots,q-1\}$ and \emph{leading coefficient} $c_a\in \Z$. The sum $p^bc_0\,+\,c_1+c_2+\dots+c_a$ will be called the  
\emph{cross sum} of the $S_q(a,b)$-adic expansion of $n$.

\begin{example}
  For $q=8$ and $\Delta=32$ we have $p=2$, $e=3$, $a=2$, $b=1$, and 
  $$
    S_8(2,1) = \big(s_8(2,1,0),s_8(2,1,1),s_8(2,1,2)\big)=\big(73,36,32\big).
  $$
  The characteristic function $\chi_E$ of a plane in $\PG(v-1,8)$ is $64$-divisible with cardinality $s_8(2,1,0)=73$. Since the characteristic function $\chi_L$ of a line over $\F_8$ 
  is $8$-divisible $4\cdot\chi_L$ is $32$-divisible with cardinality $s_8(2,1,1)=36$. A $32$-fold point corresponds to a $32$-divisible multiset of points in $\PG(v-1,8)$ with cardinality 
  $s_8(2,1,2)=32$. As an example, the $S_8(2,1)$-adic expansion of $1049$ is given by
  $$
    1049=1\cdot 73+4\cdot 36+26\cdot 32
  $$  
  and the $S_8(2,1)$-adic expansion of $195$ is given by
  $$
    195=3\cdot 73+2\cdot 36-3\cdot 32.
  $$ 
  In the first case the leading coefficient is $26$ and a $32$-divisible multiset of points of cardinality $1049$ is given e.g.\ by 
  $\chi_{E_1}+\sum_{i=1}^4 4\cdot \chi_{L_i}+\sum_{i=1}^{26} 32\cdot \chi_{P_i}$ or by $\sum_{i=1}^5 \chi_{E_i}+\sum_{i=1}^{11} 4\cdot \chi_{L_i}+\sum_{i=1}^{9} 32\cdot \chi_{P_i}$, 
  where the $E_i$ are arbitrary planes, the $L_i$ are arbitrary lines, and the $P_i$ are arbitrary points. In the second case the leading coefficient is $-3$ and the subsequent theorem 
  tells us that no $32$-divisible multiset of points of cardinality $195$ exists in $\PG(v-1,8)$, which also implies that $195=73c_0+36c_1+32c_2$ does not have a solution  
  $\left(c_0,c_1,c_2\right)\in\mathbb{N}^3$. 
\end{example}

Based on the $S_q(a,b)$-adic expansion we can state our main theorem.
\begin{theorem}
  \label{main_thm}
  Let $q=p^e$, $n \in \Z$,and $a,b\in \N$ with $a\ge 1$, $b\le e-1$. The following statements are equivalent:
  \begin{itemize}
    \item[(i)] There exists a $p^{ae-b}$-divisible linear code of effective length $n$ over $\F_q$.
    \item[(ii)] The leading coefficient $c_a$ of the $S_q(a,b)$-adic expansion of $n$ is non-negative.
  \end{itemize}
\end{theorem}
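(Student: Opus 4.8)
The plan is to establish the two implications separately. The implication (ii)$\Rightarrow$(i) is a direct construction, and for (i)$\Rightarrow$(ii) I would reduce to the already solved case in which the divisibility constant is a genuine power of the field size, namely \cite[Theorem~1]{kiermaier2020lengths}.

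For (ii)$\Rightarrow$(i): assuming $c_a\ge 0$, the defining ranges already give $c_0,\dots,c_{a-1}\ge 0$, so $n=\sum_{i=0}^a c_i\, s_q(a,b,i)$ is a non-negative integer combination of the coin values. By (4) an $(a+1)$-dimensional subspace is $q^a$-divisible, hence $p^{ae-b}$-divisible, of cardinality $s_q(a,b,0)=[a+1]_q$, and for $1\le i\le a$ combining (4) with (3) shows that a $p^{ie-b}$-fold $(a-i+1)$-dimensional subspace is $p^{ae-b}$-divisible of cardinality $s_q(a,b,i)$. Adding $c_i$ copies of these via (2) yields a $p^{ae-b}$-divisible multiset of cardinality $n$, giving (i). (The cases $n\le 0$ are immediate: the empty code realizes $n=0$, while $n<0$ forces $c_a<0$.)

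For (i)$\Rightarrow$(ii) the crucial idea is a repetition trick. Represent the code by a $p^{ae-b}$-divisible multiset $\cM$ in some $\PG(v-1,q)$ with $\#\cM=n$. By (3) the $p^b$-fold copy $p^b\cM$ is $p^b\cdot p^{ae-b}=q^a$-divisible of cardinality $p^b n$. Since $q^a$ is now a power of the field size, \cite[Theorem~1]{kiermaier2020lengths} applies and provides a representation $p^b n=\sum_{i=0}^a d_i\, s_q(a,0,i)$ with $d_i\in\N$, where $s_q(a,0,0)=[a+1]_q$ and $s_q(a,0,i)=p^{ie}[a-i+1]_q$ for $1\le i\le a$. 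The final step converts this into an $S_q(a,b)$-representation of $n$. Reducing modulo $p^b$ and using $[a+1]_q\equiv 1\pmod{p^b}$ (as $p^b\mid q$) together with $s_q(a,0,i)\equiv 0\pmod{p^b}$ for $i\ge 1$ (as $ie\ge e>b$), the congruence $0\equiv d_0\pmod{p^b}$ forces $p^b\mid d_0$. Writing $d_0=p^b c_0$ and $c_i=d_i$ for $i\ge 1$ and dividing by $p^b$ gives $n=c_0[a+1]_q+\sum_{i=1}^a c_i\, p^{ie-b}[a-i+1]_q=\sum_{i=0}^a c_i\, s_q(a,b,i)$ with all $c_i\in\N$. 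Finally, the carry relations $p^{e-b}s_q(a,b,0)=s_q(a,b,1)+q\,s_q(a,b,a)$ and $q\,s_q(a,b,i)=s_q(a,b,i+1)+q\,s_q(a,b,a)$ for $1\le i\le a-1$ show that reducing any non-negative representation to the canonical digit ranges only feeds weight into strictly higher positions and into position $a$; hence the leading coefficient of the unique $S_q(a,b)$-adic expansion of $n$ satisfies $c_a\ge 0$, which is (ii).

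The main obstacle is precisely the last transfer: passing from a $q^a$-representation of $p^b n$ back to an $(a,b)$-representation of $n$. This is exactly where the design of the coin values pays off — that $s_q(a,b,0)$ is coprime to $p$ while every coin $s_q(a,b,i)$ with $i\ge 1$ has $p$-adic valuation $ie-b\ge e-b> 0$ is what forces $p^b\mid d_0$ and lets one divide out the repetition cleanly. Verifying the two carry relations and the resulting equivalence between "$n$ has a non-negative coin representation'' and "$c_a\ge 0$'' is routine (this is the positional-system remark preceding the theorem), but it has to be carried out to match statement (ii) as phrased.
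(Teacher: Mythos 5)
Your proposal is correct, but the hard direction (i)\,$\Rightarrow$\,(ii) follows a genuinely different route from the paper. The paper proves (i)\,$\Rightarrow$\,(ii) by induction on $a$: it restricts the multiset to a hyperplane $H$, uses that $\cM|_H$ is $\Delta/q$-divisible (Lemma~\ref{lemma_divisibility_hyperplane}) to read off the $S_q(a-1,b)$-adic expansion of $\cM(H)$ and obtain $\sigma\ge\tau$ from the induction hypothesis, and then combines this with the averaging Lemma~\ref{lemma_small_hyperplane} (a hyperplane with $\cM(H)<\#\cM/q$) to force $c_a\ge 0$; the base case $a=1$ needs its own geometric argument (Lemma~\ref{lemma_aux}). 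You instead pass to the $p^b$-fold repetition $p^b\cM$, which is $q^a$-divisible of cardinality $p^bn$, invoke the already-established characterization for divisibility by a power of the field size as a black box, and then transfer the resulting non-negative representation of $p^bn$ back to one of $n$ via the observation that $s_q(a,b,0)=[a+1]_q\equiv 1\pmod{p^b}$ while all other coins are divisible by $p^b$, which forces $p^b\mid d_0$. I checked the two carry relations you state ($p^{e-b}s_q(a,b,0)=s_q(a,b,1)+q\,s_q(a,b,a)$ and $q\,s_q(a,b,i)=s_q(a,b,i+1)+q\,s_q(a,b,a)$); both are correct, and processing the positions $0,1,\dots,a-1$ in order they do convert any non-negative representation into the unique canonical expansion while only ever adding to higher positions, so the canonical leading coefficient is indeed non-negative. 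Your argument is shorter and avoids redoing the geometric induction, at the price of relying on the full strength of the cited theorem rather than only on its two auxiliary lemmas; the paper's self-contained induction also yields the intermediate inequality $\sigma\ge\tau$ between the cross sum and the hyperplane deficiency, which your reduction does not produce.
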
  

First we will show the implication (ii) $\Rightarrow$ (i):
\begin{lemma}
  \label{lemma_implication_easy}
  Let $q=p^e$, $n \in \Z$,and $a,b\in \N$ with $a\ge 1$, $b\le e-1$. If the leading coefficient $c_a$ of the $S_q(a,b)$-adic expansion of $n$ is non-negative, then 
  there exists a $p^{ae-b}$-divisible linear code of effective length $n$ over $\F_q$.
\end{lemma}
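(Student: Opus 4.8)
The plan is to realize $n$ directly as the cardinality of an explicit $p^{ae-b}$-divisible multiset of points assembled from the base numbers $s_q(a,b,i)$. First I would recall from constructions (3) and (4) that each $s_q(a,b,i)$ is the cardinality of a concrete $p^{ae-b}$-divisible multiset. For $i=0$ take the characteristic function $\chi_U$ of an $(a+1)$-dimensional subspace $U$, which by (4) is $q^a$-divisible, hence $p^{ae}$-divisible and therefore also $p^{ae-b}$-divisible since $b\ge 0$; its cardinality is $[a+1]_q=s_q(a,b,0)$. For $1\le i\le a$ take the $p^{ie-b}$-fold multiple of the characteristic function of an $(a-i+1)$-dimensional subspace, which by (4) is $q^{a-i}=p^{(a-i)e}$-divisible, so that scaling by $p^{ie-b}$ via (3) yields divisibility by $p^{ie-b}\cdot p^{(a-i)e}=p^{ae-b}$, matching the stated cardinality $p^{ie-b}\cdot[a-i+1]_q=s_q(a,b,i)$.

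Next I would observe that under the hypothesis $c_a\ge 0$ all coefficients of the $S_q(a,b)$-adic expansion of $n$ are non-negative: the coefficients $c_0,\dots,c_{a-1}$ lie in the prescribed ranges $\{0,\dots,p^{e-b}-1\}$ and $\{0,\dots,q-1\}$ and are thus non-negative by definition, while $c_a\ge 0$ by assumption. Consequently
$$ n=\sum_{i=0}^a c_i\cdot s_q(a,b,i) $$
is a non-negative integer combination of the base numbers, and in particular $n\ge 0$.

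Finally I would assemble the code by repeated application of construction (2). Working in an ambient space $\PG(v-1,q)$ of sufficiently large dimension $v$ (so that all the subspaces above fit, using the embedding construction (1) if necessary), form the multiset
$$ \cM=\sum_{i=0}^a c_i\cdot \cM_i, $$
where $\cM_i$ denotes the $p^{ae-b}$-divisible multiset of cardinality $s_q(a,b,i)$ described in the first step. Since each summand is $p^{ae-b}$-divisible, construction (2) guarantees that $\cM$ is $p^{ae-b}$-divisible with cardinality $\#\cM=n$. The associated linear code $C_{\cM}$ then has effective length $n$ and the required divisibility, as claimed.

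This direction involves no genuine obstacle; the only care needed is the bookkeeping of exponents — verifying $(ie-b)+(a-i)e=ae-b$ so that the scaled subspaces have exactly divisibility $p^{ae-b}$ — together with the remark that the sign hypothesis on the single free coefficient $c_a$ is precisely what forces every coefficient, and hence $n$ itself, to be non-negative. (The genuine difficulty of the theorem lies entirely in the converse implication, where one must rule out all cardinalities with $c_a<0$.)
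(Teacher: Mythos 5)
Your proposal is correct and matches the paper's own proof essentially verbatim: both realize each $s_q(a,b,i)$ as the cardinality of a $p^{ie-b}$-fold $(a-i+1)$-dimensional subspace (respectively an $(a+1)$-space for $i=0$), note that non-negativity of $c_a$ forces all coefficients to be non-negative, and sum the resulting $\Delta$-divisible multisets. No further comment is needed.
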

\begin{proof}
  Let $n =\sum_{i=0}^a c_i \cdot s_q(a,b,i)$ be the $S_q(a,b)$-adic expansion of $n$ and $\Delta:=p^{ae-b}$. By assumption on the non-negativity of the leading coefficient 
  $c_a$ and the definition of the $S_q(a,b)$-adic expansion of $n$ we have $c_i\in\mathbb{N}$ for all $0\le i\le a$. Next we will construct a $\Delta$-divisible multiset 
  $\cM$ of points in $\PG(v-1,q)$ of cardinality $n$. To this end, let $U_i$ be $i$-dimensional subspaces for $1\le i\le a+1$ (assuming that the ambient dimension $v$ is 
  sufficiently large). With this $\cM_0:=\chi_{U_{a+1}}$ is a $q^a$-divisible multiset of points with cardinality $s_q(a,b,0)=[a+1]_q$. Since $\Delta$ divides $q^a$, $\cM_0$ 
  is also $\Delta$-divisible. For $1\le i\le a$ we set $\cM_i:= p^{ie-b} \cdot\chi_{U_{a-i+1}}$, so that $\cM_i$ is $\Delta$-divisible with cardinality $s_q(a,b,i)$. With this, 
  we set $\cM:=\sum_{i=0}^a c_i\cdot \cM_i$, so that $\cM$ is $\Delta$-divisible with cardinality $n$. The corresponding linear code $C_{\cM}$ over $\F_q$ has effective length $n$ 
  and is $\Delta$-divisible.    
\end{proof}   

\begin{lemma}(\cite{sylvester1882subvariants}) 
  \label{lemma_frobenius_two_summands}
  Let $a_1,a_2$ be two positive coprime integers. The largest integer that cannot be written as a non-negative integer linear combination $c_1a_1+c_2a_2$, where $c_1,c_2\in\N$, 
  is given by $g(a_1,a_2):=a_1a_2-a_1-a_2$.
\end{lemma}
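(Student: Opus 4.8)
The plan is to split the claim into its two natural halves: first, that $g(a_1,a_2):=a_1a_2-a_1-a_2$ itself admits no representation $c_1a_1+c_2a_2$ with $c_1,c_2\in\N$, and second, that every integer strictly larger than $g(a_1,a_2)$ does. Both halves rest on the single elementary fact that, since $\gcd(a_1,a_2)=1$, the multiples $0,a_2,2a_2,\dots,(a_1-1)a_2$ form a complete system of residues modulo $a_1$; equivalently, every integer $n$ has a unique $c_2\in\{0,\dots,a_1-1\}$ with $c_2a_2\equiv n\pmod{a_1}$.

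For the non-representability of $g(a_1,a_2)$, I would argue by contradiction. Suppose $a_1a_2-a_1-a_2=c_1a_1+c_2a_2$ with $c_1,c_2\in\N$. Rearranging gives $a_1a_2=(c_1+1)a_1+(c_2+1)a_2$. Reducing modulo $a_1$ yields $(c_2+1)a_2\equiv 0\pmod{a_1}$, and coprimality forces $a_1\mid c_2+1$, hence $c_2+1\ge a_1$; by the symmetric argument modulo $a_2$ one gets $c_1+1\ge a_2$. But then $(c_1+1)a_1+(c_2+1)a_2\ge a_2a_1+a_1a_2=2a_1a_2>a_1a_2$, a contradiction.

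For the representability of every $n>g(a_1,a_2)$, I would invoke the residue fact to pick the unique $c_2\in\{0,\dots,a_1-1\}$ with $c_2a_2\equiv n\pmod{a_1}$, set $c_1:=(n-c_2a_2)/a_1\in\Z$, and verify $c_1\ge 0$. If instead $c_1\le -1$, then using $c_2\le a_1-1$ we would have $n=c_1a_1+c_2a_2\le -a_1+(a_1-1)a_2=a_1a_2-a_1-a_2=g(a_1,a_2)$, contradicting $n>g(a_1,a_2)$. Hence $c_1,c_2\in\N$ and $n=c_1a_1+c_2a_2$ is the desired representation. This being a classical result, there is no genuine obstacle; the only point requiring care is the sign-and-size bookkeeping in this last step, where one must use the \emph{minimal} non-negative residue $c_2\le a_1-1$ so that the negativity of $c_1$ converts into the sharp bound $n\le g(a_1,a_2)$ rather than a weaker estimate off by a multiple of $a_1$.
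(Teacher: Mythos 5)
Your proof is correct. Note, however, that the paper itself gives no proof of this lemma at all: it is stated as a classical result with a citation to Sylvester (1882), and is then only \emph{used} (in Lemma~\ref{lemma_aux} and in Section~\ref{sec_projective}). Your argument is the standard one --- using that $0,a_2,\dots,(a_1-1)a_2$ form a complete residue system modulo $a_1$ to both rule out a representation of $a_1a_2-a_1-a_2$ and to produce one for every larger integer --- and the sign bookkeeping in the second half is handled correctly by taking the minimal non-negative residue $c_2\le a_1-1$. So the write-up is a valid self-contained replacement for the external citation, though for the purposes of this paper the citation suffices.
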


\begin{lemma}
  \label{lemma_aux}
  Let $q=p^e$ and $0\le b\le e-1$ be an integer. If $\cM$ is a $p^{e-b}$-divisible multiset of points in $\PG(v-1,q)$ of cardinality $n$, then there exist non-negative integers $s,t$ 
  such that $n=s\cdot (q+1)+t\cdot \Delta$, where $\Delta:=p^{e-b}$.
\end{lemma}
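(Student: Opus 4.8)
The plan is to avoid an induction on the geometry entirely and instead reduce the statement to the already-known characterisation of $q$-divisible codes by a simple scaling trick. Writing $\Delta=p^{e-b}$ we have $p^{b}\Delta=q$. Given a $\Delta$-divisible multiset $\cM$ of cardinality $n$, I would first pass to the scaled multiset $\cM':=p^{b}\cdot\cM$. By construction~(3) the multiset $\cM'$ is $p^{b}\Delta=q$-divisible and has cardinality $n'=p^{b}n$ (code-theoretically, $C_{\cM'}$ is just the $p^{b}$-fold repetition of $C_{\cM}$, so all of its weights are $p^{b}$ times the weights of $C_{\cM}$ and hence divisible by $q$). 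This step is what makes the geometric difficulty disappear: a direct inductive argument that peels off a fully covered line from $\cM$ would have to guarantee that such a line exists whenever $\Delta\nmid n$, which is exactly the awkward point that the scaling sidesteps.

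The second step invokes the known result. By \cite[Theorem~1]{kiermaier2020lengths} in the case $r=1$ --- equivalently the case $a=1$, $b=0$ of the present set-up, for which $S_q(1,0)=(q+1,q)$ --- a $q$-divisible code of effective length $n'$ exists if and only if $n'$ lies in the numerical semigroup generated by $q+1$ and $q$; concretely, writing $n'=c_0'(q+1)+c_1'q$ with $c_0'\in\{0,\dots,q-1\}$, existence is equivalent to the leading coefficient $c_1'$ being non-negative. Since $\cM'$ realises such a $q$-divisible code, its length $n'$ is representable. Reducing the first coefficient modulo $q$ we may take $c_0'=n'\bmod q$, and then $c_1'\ge 0$ is equivalent to the inequality $n'\ge (n'\bmod q)\,(q+1)$.

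The final step is the arithmetic transfer back to $n$, which is the only point requiring care. Put $r:=n\bmod\Delta\in\{0,\dots,\Delta-1\}$ and $A:=\lfloor n/\Delta\rfloor$, so that $n=A\Delta+r$ and $n'=p^{b}n=Aq+p^{b}r$. Because $0\le p^{b}r<p^{b}\Delta=q$, this displays $p^{b}r$ as the residue $n'\bmod q$, so the inequality from the second step reads $p^{b}n\ge p^{b}r\,(q+1)$; dividing by $p^{b}$ gives $n\ge r\,(q+1)$. Since $q+1\equiv 1\pmod\Delta$ we have $n-r(q+1)\equiv n-r\equiv 0\pmod\Delta$, so $t:=\big(n-r(q+1)\big)/\Delta$ is a non-negative integer and $n=r\cdot(q+1)+t\cdot\Delta$ with $s:=r\ge 0$ and $t\ge 0$, as claimed. (One could equally finish via Lemma~\ref{lemma_frobenius_two_summands}, since $\gcd(q+1,\Delta)=1$ because $q+1\equiv 1\pmod p$; but once $n\ge r(q+1)$ is in hand the representation is immediate.)
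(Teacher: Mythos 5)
Your proof is correct, but it takes a genuinely different route from the paper's. The paper proves Lemma~\ref{lemma_aux} by a self-contained geometric argument: after handling span dimensions $k=1,2$ directly and using Lemma~\ref{lemma_frobenius_two_summands} to reduce to the case $n<q^2$, it locates a $(k-2)$-dimensional subspace $S$ with $\cM(S)=0$ and sums the multiplicities of the $q+1$ hyperplanes through $S$, each congruent to $n$ modulo $\Delta$, to get $n\ge s(q+1)$ and $n\equiv s\pmod\Delta$. You instead reduce to the already-published case $r=1$ of \cite[Theorem~1]{kiermaier2020lengths} via the scaling $\cM\mapsto p^b\cM$; this is legitimate (that theorem is an external input for this paper, so there is no circularity), and the arithmetic transfer back to $n$ is sound: $p^br$ really is $n'\bmod q$ because $0\le p^br<p^b\Delta=q$, and $q+1\equiv 1\pmod\Delta$ makes $t$ an integer. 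Your argument is shorter, purely arithmetic once the external theorem is granted, and even produces the sharper normalization $s=n\bmod\Delta<\Delta$ directly, which is exactly the form that Lemma~\ref{lemma_induction_start} extracts afterwards by reducing $s'$ modulo $\Delta$; the paper's argument has the advantage of being self-contained and of introducing the hyperplane-counting and projection techniques that are reused in the proof of Theorem~\ref{main_thm} and in Section~\ref{sec_projective}. It is also worth noting that your scaling observation, combined with the uniqueness of the reduced expansion and the fact that the residue of $p^bn$ modulo $q$ is automatically divisible by $p^b$, appears strong enough to extend to arbitrary $a$ and hence to yield an alternative derivation of the implication (i)$\Rightarrow$(ii) of Theorem~\ref{main_thm} directly from \cite[Theorem~1]{kiermaier2020lengths}.
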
 
\begin{proof}
  W.l.o.g.\ we assume $n\ge 1$. Let $k$ be the dimension of the span of $\cM$, i.e.\ the span of the points with positive multiplicity, and w.l.o.g.\ we assume $v=k$. If $k=1$, then we have $\cM(P) \equiv 0 \pmod \Delta$ for the unique 
  point in $\PG(0,q)$ and there exists a non-negative integer $t$ such that $n=t\cdot \Delta$. If $k=2$, then $\PG(1,q)$ consists of $q+1$ pairwise different points $P_0,\dots,P_q$ 
  and we have $\cM(P_i)\equiv n\pmod \Delta$ for all $0\le i\le q$. Now let $\cM'$ arise from $\cM$ by decreasing the points multiplicities by $\Delta$ till we have $\cM(P_i)=s$ for all $0\le i\le q$ for some 
  integer $0\le s<\Delta$. Here $t$ is given by $\big(n-s\cdot (q+1)\big)/\Delta$.
  
  Since $q+1$ and $\Delta$ are coprime, the largest integer that cannot be written as $s=s(q+1)+t\Delta$ for non-negative integers $s,t$ is given by 
  $$
    (q+1)\Delta-(q+1)-\Delta\le q^2+q -(q+1)-\Delta<q^2,
  $$ 
  see Lemma~\ref{lemma_frobenius_two_summands}. 
  So, we can assume $n<q^2$ and $k\ge 3$ in the following. Since $k\ge 3$ there are at least $[3]_q=q^2+q+1>q^2$ points and there 
  exists a point $P$ with multiplicity zero. Let $S$ be a subspace attaining the maximum possible dimension $l$ satisfying $\cM(S)=0$. Clearly we have $l\ge 1$. If $l<k-2$ then consider 
  the $[k-l]_q\ge[3]_q=q^2+q+1>q^2$ $(l+1)$ dimensional subspaces $S'\ge S$. Since $\cM(S')>0$ and these spaces pairwise intersect in $S$ we have $n=\#\cM\ge q^2+q+1 > q^2$ -- 
  contradiction. So, let $S$ be a $(k-2)$-dimensional subspace with $\cM(S)=0$ and consider the $q+1$ hyperplanes $H_0,\dots,H_q$ that contain $S$. Since $\cM(H_i)\equiv n\pmod \Delta$
  for all $0\le i\le q$, there exists an integer $0\le s<\Delta$ such that $\cM(H_i)\equiv s\pmod \Delta$. Since the hyperplanes $H_i$ pairwise intersect in $S$ and $\cM(S)=0$, we have 
  $n=\sum_{i=0}^q \cM(H_i)$, so that $n\ge s\cdot (q+1)$ and $n\equiv s\pmod \Delta$ (using the fact that $\Delta$ divides $q$). Thus, we can set $t=(n-(q+1)s)/\Delta$.
\end{proof}

\begin{lemma}
  \label{lemma_induction_start}
  Theorem~\ref{main_thm} is true for $a=1$.
\end{lemma}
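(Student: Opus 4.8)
The plan is to reduce the $a=1$ case to the two tools already at hand, namely Lemma~\ref{lemma_implication_easy} and Lemma~\ref{lemma_aux}. First I would record what the statement says for $a=1$. Here $\Delta=p^{ae-b}=p^{e-b}$, and the two base numbers are $s_q(1,b,0)=[2]_q=q+1$ and $s_q(1,b,1)=q\cdot[1]_q/p^b=p^{e-b}=\Delta$. The $S_q(1,b)$-adic expansion of an integer $n$ is therefore $n=c_0(q+1)+c_1\Delta$ with $c_0\in\{0,\dots,\Delta-1\}$ and leading coefficient $c_1\in\Z$. Since the implication (ii) $\Rightarrow$ (i) is exactly Lemma~\ref{lemma_implication_easy}, it remains to prove (i) $\Rightarrow$ (ii), i.e.\ that the existence of a $\Delta$-divisible code of effective length $n$ forces $c_1\ge 0$.

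For this remaining direction I would pass from codes to geometry: a $\Delta$-divisible linear code over $\F_q$ of effective length $n$ corresponds to a $\Delta$-divisible multiset $\cM$ of points of cardinality $n$ in some $\PG(v-1,q)$. Because $\Delta=p^{e-b}$ precisely when $a=1$, Lemma~\ref{lemma_aux} applies verbatim and produces non-negative integers $s,t$ with $n=s(q+1)+t\Delta$. The crux is then to see that this non-negative representation agrees, after a normalisation of the first coefficient, with the canonical $S_q(1,b)$-adic expansion, so that the canonical leading coefficient inherits non-negativity.

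To carry out the normalisation I would use two elementary facts: $\Delta=p^{e-b}$ divides $q=p^e$ since $b\ge 0$, whence $q+1\equiv 1\pmod\Delta$ and in particular $\gcd(q+1,\Delta)=1$. Writing $s=s'+k\Delta$ with $s'\in\{0,\dots,\Delta-1\}$ and $k\ge 0$, substitution gives $n=s'(q+1)+\big(k(q+1)+t\big)\Delta$; reducing modulo $\Delta$ and using $q+1\equiv 1$ yields $n\equiv s'\pmod\Delta$, so $s'=n\bmod\Delta$ is forced and therefore equals the canonical digit $c_0$. Consequently $c_1=k(q+1)+t\ge 0$, as required. I expect the only delicate point to be this bookkeeping identifying the reduced coefficient $s'$ with $c_0$, which rests on the coprimality of $q+1$ and $\Delta$ together with the normalisation range $c_0\in\{0,\dots,\Delta-1\}$; everything else is a direct invocation of the two cited lemmas.
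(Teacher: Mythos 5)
Your proposal is correct and follows essentially the same route as the paper: reduce (ii) $\Rightarrow$ (i) to Lemma~\ref{lemma_implication_easy}, obtain $n=s(q+1)+t\Delta$ from Lemma~\ref{lemma_aux}, and normalise the first coefficient modulo $\Delta$ so that the non-negative remainder $t+k(q+1)$ becomes the canonical leading coefficient. The extra bookkeeping you include (identifying $s'$ with $c_0$ via $q+1\equiv 1\pmod\Delta$) is implicit in the paper's one-line argument and is a welcome clarification.
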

\begin{proof}
  Due to Lemma~\ref{lemma_implication_easy}, it suffices to show the implication (i) $\Rightarrow$ (ii). From Lemma~\ref{lemma_aux} we conclude the existence of $s',t'\in \N$ with 
  $n=s'(q+1)+t'\Delta$. Write $s'=s+x\Delta$ for $s,x\in\N$ with $s<\Delta$ and set $t:=t'+x(q+1)\ge 0$. 
\end{proof}  

\begin{lemma}
  \label{lemma_small_hyperplane}
  (E,g,\ \cite[Lemma 5]{kiermaier2020lengths}) Let $\cM$ be a non-empty multiset of points in $\PG(v-1,q)$, then there exists a hyperplane $H$ with $\cM(H)<\tfrac{\#\cM}{q}$.
\end{lemma}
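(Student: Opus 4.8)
The plan is to prove this by a straightforward double-counting (averaging) argument over the set of all hyperplanes of $\PG(v-1,q)$, where we may assume $v\ge 2$ so that hyperplanes exist. The guiding idea is that if the \emph{average} of $\cM(H)$ taken over all hyperplanes $H$ is strictly less than $\tfrac{\#\cM}{q}$, then at least one hyperplane must realize a value below $\tfrac{\#\cM}{q}$, which is exactly the assertion.

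First I would compute the incidence sum $\sum_{H}\cM(H)$, the sum ranging over all hyperplanes $H$. Writing $\cM(H)=\sum_{P\le H}\cM(P)$ and interchanging the order of summation yields $\sum_{H}\cM(H)=\sum_{P}\cM(P)\cdot\#\{H:P\le H\}$. Every point lies in exactly $[v-1]_q$ hyperplanes (by duality this is the number of points of a hyperplane, or equivalently the number of points of the quotient space $\PG(v-2,q)$), so the inner count is independent of $P$ and we obtain $\sum_{H}\cM(H)=[v-1]_q\cdot\#\cM$. Since the total number of hyperplanes of $\PG(v-1,q)$ equals $[v]_q$, the average multiplicity of a hyperplane is $\tfrac{[v-1]_q}{[v]_q}\cdot\#\cM$.

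Next I would verify the strict inequality $\tfrac{[v-1]_q}{[v]_q}<\tfrac{1}{q}$. Since both $[v-1]_q$ and $[v]_q$ are positive, this is equivalent to $q\,[v-1]_q<[v]_q$, i.e.\ to $\tfrac{q^v-q}{q-1}<\tfrac{q^v-1}{q-1}$, which reduces to $-q<-1$ and therefore holds for every $q\ge 2$. Consequently the average multiplicity $\tfrac{[v-1]_q}{[v]_q}\cdot\#\cM$ is strictly smaller than $\tfrac{\#\cM}{q}$.

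Finally, because the arithmetic mean of the numbers $\cM(H)$ is strictly less than $\tfrac{\#\cM}{q}$, they cannot all be at least $\tfrac{\#\cM}{q}$; hence there exists a hyperplane $H$ with $\cM(H)<\tfrac{\#\cM}{q}$, as claimed. There is no genuine obstacle here, as the whole argument is a clean averaging estimate; the only point needing care is the \emph{strictness} of $\tfrac{[v-1]_q}{[v]_q}<\tfrac1q$, since it is precisely this that upgrades the conclusion from $\cM(H)\le\tfrac{\#\cM}{q}$ to the strict bound $\cM(H)<\tfrac{\#\cM}{q}$.
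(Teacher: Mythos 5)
Your averaging argument is correct: the incidence count $\sum_H\cM(H)=[v-1]_q\cdot\#\cM$, the hyperplane count $[v]_q$, and the strict inequality $q\,[v-1]_q<[v]_q$ are all right, and together they force some hyperplane below the average $\tfrac{[v-1]_q}{[v]_q}\#\cM<\tfrac{\#\cM}{q}$. The paper itself only cites this lemma from \cite{kiermaier2020lengths} without reproducing a proof, and your double-counting argument is exactly the standard one given there, so nothing further is needed.
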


\begin{lemma}
  \label{lemma_divisibility_hyperplane}
  (\cite[Lemma 4]{kiermaier2020lengths}) Let $\cM$ be a $\Delta$-divisible multiset of points in $\PG(v-1,q)$ and $H$ be an arbitrary hyperplane. If $q$ divides $\Delta$, then 
  the restriction $\cM|_H$ of $\cM$ to $H$ is $\Delta/q$-divisible. 
\end{lemma}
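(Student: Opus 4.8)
The plan is to verify the defining congruence of divisibility directly for the restricted multiset $\cM|_H$, working inside the hyperplane $H$ regarded as a copy of $\PG(v-2,q)$. Recall that a multiset is $\Delta'$-divisible exactly when its cardinality is congruent modulo $\Delta'$ to its mass on every hyperplane of the ambient space. For $\cM|_H$ the ambient space is $H$, its cardinality is $\#(\cM|_H)=\cM(H)$, and the hyperplanes of $H$ are precisely the codimension-two subspaces $G$ of the original $\PG(v-1,q)$ that are contained in $H$; for such $G$ we moreover have $(\cM|_H)(G)=\cM(G)$. So it suffices to show $\cM(G)\equiv\cM(H)\pmod{\Delta/q}$ for every such $G$.

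First I would fix a codimension-two subspace $G\le H$. The key combinatorial input is that there are exactly $q+1$ hyperplanes $H_0=H,H_1,\dots,H_q$ of $\PG(v-1,q)$ containing $G$, that they pairwise intersect in $G$, and that they together cover the whole space. Counting point multiplicities with this partition of the complement of $G$ yields the identity
$$
  \sum_{i=0}^{q}\cM(H_i)=\#\cM+q\cdot\cM(G),
$$
since every point outside $G$ lies on exactly one of the $H_i$ while every point of $G$ lies on all $q+1$ of them.

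Next I would feed in the $\Delta$-divisibility of $\cM$: each term satisfies $\cM(H_i)\equiv\#\cM\pmod\Delta$, so the left-hand side is $\equiv(q+1)\#\cM\pmod\Delta$. Comparing with the identity above gives $q\cdot\cM(G)\equiv q\cdot\#\cM\pmod\Delta$, i.e.\ $\Delta\mid q\big(\cM(G)-\#\cM\big)$. Because $q\mid\Delta$, dividing through by $q$ yields $\cM(G)\equiv\#\cM\pmod{\Delta/q}$. Since also $\cM(H)\equiv\#\cM\pmod{\Delta}$, hence a fortiori modulo $\Delta/q$, transitivity gives $\cM(G)\equiv\cM(H)\pmod{\Delta/q}$, which is exactly the required congruence; as $G$ was arbitrary, $\cM|_H$ is $\Delta/q$-divisible.

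The argument is short, and the only care needed is the bookkeeping: correctly identifying the hyperplanes of $H$ with the codimension-two subspaces of the ambient space lying in $H$, and getting the coefficient of $\cM(G)$ in the covering identity right. The single genuinely arithmetic step is passing from $\Delta\mid q\big(\cM(G)-\#\cM\big)$ to the congruence modulo $\Delta/q$, which is precisely where the hypothesis $q\mid\Delta$ enters; without it one could only divide by $\gcd(q,\Delta)$ and the conclusion would weaken accordingly.
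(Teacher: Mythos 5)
Your argument is correct. The paper itself states this lemma only as a citation of \cite[Lemma 4]{kiermaier2020lengths} and gives no proof, but your derivation --- the counting identity $\sum_{i=0}^{q}\cM(H_i)=\#\cM+q\cdot\cM(G)$ over the pencil of $q+1$ hyperplanes through a codimension-two subspace $G\le H$, followed by reducing modulo $\Delta$ and cancelling the factor $q$ using $q\mid\Delta$ --- is exactly the standard proof given in that reference, with all the bookkeeping (the coefficient of $\cM(G)$, the identification of hyperplanes of $H$, and the division step) handled correctly.
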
  
  
\bigskip

\noindent
\textit{Proof of Theorem~\ref{main_thm}.}
Due to Lemma~\ref{lemma_implication_easy}, it suffices to show the implication (i) $\Rightarrow$ (ii). Using Lemma~\ref{lemma_induction_start} we can assume $a>1$ and prove by induction on $a$.

So, let $n=\sum_{i=0}^a c_i s_q(a,b,i)$ be the $S_q(a,b)$-adic expansion of $n$ and $\sigma=p^bc_0+\sum_{i=1}^a c_i$ be its cross sum. Let $H$ be a hyperplane and set $m:=\cM(H)$. 
Since $\cM$ is $\Delta$-divisible for $\Delta:=p^{ae-b}$ there exists a non-negative integer $\tau$ with $n-m=\tau\Delta$. 
We compute
\begin{eqnarray}
  m &=& n-\tau\Delta = c_0 s_q(a,b,0) \,+\,\sum_{i=1}^{a-1} c_i s_q(a,b,i) \,+\, c_a s_q(a,b,a) -\tau \Delta \notag\\
    &=& c_0 s_q(a-1,b,0) +c_0\cdot q^a+\sum_{i=1}^{a-1} c_i \left(s_q(a-1,b,i)+\Delta\right) + c_a\Delta -\tau\Delta \notag\\ 
    &=& \sum_{i=0}^{a-1} c_i s_q(a-1,b,i) +\big(\sigma-\tau\big)\Delta \label{eq_line_3}\\ 
    &=& \sum_{i=0}^{a-1} c_i s_q(a-1,b,i) +\big(c_{a-1}+q(\sigma-\tau)\big)\Delta/q \label{eq_line_4}   
\end{eqnarray}   
By construction, $\cM|_H$ is $\Delta/q$-divisible, see Lemma~\ref{lemma_divisibility_hyperplane}, with cardinality $m$ and Equation~(\ref{eq_line_4}) gives the $S_q(a-1,b)$-adic expansion of $m$. Hence, 
by induction we get $c_{a-1}+q(\sigma-\tau)\ge 0$. So $q(\sigma-\tau) \ge -c_{a-1}>-q$, implying $\sigma-\tau>-1$ and thus $\sigma\ge \tau$.

By Lemma~\ref{lemma_small_hyperplane} we may choose $H$ such that $m<\tfrac{n}{q}$. Thus using the expression in Equation~(\ref{eq_line_3}) for $m$ we compute
\begin{eqnarray*}
  0 &<& n-qm = \sum_{i=0}^a c_i s_q(a,b,i) - \sum_{i=0}^{a-1} c_i s_q(a-1,b,i)q \,-\, \big(\sigma-\tau\big)\Delta q \\ 
  &=& c_0 +\sum_{i=1}^{a-1} p^{e-b}q^{i-1}c_i  +c_a \Delta -\, \big(\sigma-\tau\big)\Delta q \\ 
  &\le& p^{e-b}-1 + p^{e-b}(q-1)\sum_{i=0}^{a-2}q^i +c_a\Delta \\ 
  &<& p^{e-b}+ p^{e-b}\left(q^{a-1}-1\right) +c_a\Delta \\ 
  &=& (1+c_a)\Delta,
\end{eqnarray*}  
which implies $c_a\ge 0$.\hfill{$\square$}

\medskip

\begin{example}
  \label{ex_old_1}
  For each positive integer $n$ that is either even or at least $5$ a $2$-divisible code of effective length $n$ exists over $\F_4$. For the constructive part we can consider a $2$-fold point, a 
  line, and combinations thereof. For the other direction we can easily check that the leading coefficient of the $S_4(1,1)$-adic expansion of $1$ as well as of $3$ is negative, so that we 
  can apply Theorem~\ref{main_thm}. 
\end{example}

\begin{example}
  \label{ex_old_2}
  For each positive integer $n$ that is not contained in 
  $$
  \{2,4,6,12,14,22\}\cup\{1,3,5,7,9,11,13,15,17,19,23,25,27,33,35,43\}
  $$
  an $8$-divisible linear code of effective length $n$ exists over $\F_4$. Note that an $8$-fold point, a $2$-fold line and a plane are $8$-divisible of cardinalities $8$, $10$, and $21$. 
  The mentioned positive integers are the only ones that cannot be expressed as non-negative integer linear combinations of $8$, $10$, and $21$.
\end{example}

\begin{example}
  \label{ex_old_3}
  For each positive integer $n$ that is either even or at least $9$ a $2$-divisible code of effective length $n$ exists over $\F_8$. For the constructive part we can consider a $2$-fold point, a 
  line, and combinations thereof. For the other direction we can easily check that the leading coefficient of the $S_8(1,2)$-adic expansion of $n\in\{1,3,5,7\}$ is negative, so that we 
  can apply Theorem~\ref{main_thm}.   
\end{example}

We can also consider the dimension $k$ of the span of a $\Delta$-divisible multiset of points $\cM$. If $k=1$, then $\cM$ clearly is a $\lambda$-fold point where $\Delta$ 
divides $\lambda$. Also the case $k=2$ can be easily classified.
\begin{lemma}
  \label{lemma_dimension_2}
  Let $\cM$ be a $\Delta$-divisible multiset of points in $\PG(1,q)$. Then, there exist $l$, possibly equal, points $P_1,\dots,P_l$ such that $\cM=\sum_{i=1}^l \Delta\chi_{P_i}
  +s\chi_L$, where $L$ is the line forming the ambient space and $s=\left(\#\cM-l\Delta\right)/(q+1)\in\N$. Moreover, $\Delta$ divides $qs$.
\end{lemma}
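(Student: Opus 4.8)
The plan is to exploit the fact that in $\PG(1,q)$ the hyperplanes are precisely the $q+1$ points of the ambient line $L$, so that the $\Delta$-divisibility condition constrains the point multiplicities directly rather than through nested subspaces. First I would label the points of $L$ as $P_0,\dots,P_q$ and write $m_i:=\cM(P_i)$ and $n:=\#\cM$. Applying the defining congruence $\#\cM\equiv\cM(H)\pmod\Delta$ to each hyperplane $H=P_i$ turns $\Delta$-divisibility into the family of congruences $n\equiv m_i\pmod\Delta$ for all $i$. The immediate payoff is that all multiplicities $m_i$ share a single residue modulo $\Delta$, namely the residue of $n$.

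Next I would let $s\in\{0,\dots,\Delta-1\}$ be this common residue, so that $s\equiv n\pmod\Delta$ and $m_i\equiv s\pmod\Delta$ for every $i$. Since each $m_i$ is a non-negative integer in the residue class of $s$ and $s$ is its least non-negative representative, we have $m_i\ge s$, hence $m_i=s+\Delta k_i$ with $k_i\in\N$. Distributing, for each $i$, the $k_i$ copies of $P_i$ into the list of (possibly repeated) points $P_1,\dots,P_l$ with $l:=\sum_{i=0}^q k_i$ yields exactly the claimed decomposition $\cM=\sum_{i=1}^l\Delta\chi_{P_i}+s\chi_L$. Summing multiplicities over all $q+1$ points gives $n=(q+1)s+\Delta l$, which rearranges to $s=(\#\cM-l\Delta)/(q+1)$, and $s\in\N$ holds by construction.

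Finally, for the \emph{Moreover} claim I would compare two evaluations of $n$ modulo $\Delta$: on one hand $s\equiv n\pmod\Delta$ by the definition of $s$, and on the other hand the identity $n=(q+1)s+\Delta l$ gives $n\equiv(q+1)s\pmod\Delta$. Subtracting these yields $qs\equiv 0\pmod\Delta$, i.e.\ $\Delta\mid qs$, as required.

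I do not expect a genuine obstacle here; the argument is essentially bookkeeping once one notices that hyperplanes of $\PG(1,q)$ are points, which is the single conceptual step. The only place demanding a little care is the non-negativity $m_i-s\ge 0$: this is precisely what guarantees $k_i\in\N$, and hence that the $P_i$ form an honest multiset of points rather than a formal signed combination.
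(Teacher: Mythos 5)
Your proof is correct and follows essentially the same route as the paper: the paper peels off $\Delta$-fold points until every multiplicity lies in $\{0,\dots,\Delta-1\}$ and then identifies the common residue $s$, which is exactly your decomposition $m_i=s+\Delta k_i$, and its derivation of $\Delta\mid qs$ amounts to the same congruence computation you perform. No gaps.
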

\begin{proof}
  Let $\cM'$ arise from $\cM$ by recursively removing points of multiplicity $\Delta$ and let $l$ be the number of removed points. So, we have 
  $\#\cM'=\#\cM-l\Delta$, the maximum point multiplicity of $\cM'$ is at most $\Delta-1$, and $\cM'$ is also $\Delta$-divisible. From $\Delta$-divisibility we conclude 
  $\cM'(P)\equiv \#\cM'\pmod\Delta$ for every point $P\le L$. Since the maximum point multiplicity of $\cM'$ is at most $\Delta-1$ there exists a non-negative integer $s$
  with $\cM'(P)=s$ for all points $P$, i.e., $\cM'=s\chi_L$. Counting points gives $s=\#\cM'/(q+1)=\left(\#\cM-l\Delta\right)/(q+1)$. Since $\cM'$ is $\Delta$-divisible 
  and $\chi_L$ is $q$-divisible we conclude that $\Delta$ divides $qs$ (including the case $s=0$).
\end{proof}

As we have mentioned the Frobenius coin problem the formulate the result for the largest possible effective length $n$ such that no $p^{ae-b}$-divisible linear 
code over $\F_{p^e}$ exists in this vein:
\begin{proposition}
  Let $q=p^e$ and $a,b\in \N$ with $a\ge 1$, $b\le e-1$. The largest integer that cannot be written as a non-negative integer linear combination 
  $\sum_{i=0}^a c_is_q(a,b,i)$, where $c_0,c_1,\dots,c_a\in\N$, is given by 
  $$
    g\big(s_q(a,b,0),\dots,s_q(a,b,0)\big):=a\cdot p^{(a+1)e-b}-\frac{q^{a+1}-1}{q-1}.
  $$  
\end{proposition}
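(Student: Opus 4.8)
The plan is to reduce this Frobenius-number question to the leading-coefficient criterion furnished by Theorem~\ref{main_thm}, so that the positional-system structure of the $S_q(a,b)$-adic expansion does all the heavy lifting. First I would record the equivalence
$$
  n \text{ is a non-negative combination } \textstyle\sum_{i=0}^a c_i s_q(a,b,i)
  \iff c_a\ge 0,
$$
where $c_a$ is the leading coefficient of the $S_q(a,b)$-adic expansion of $n$. The direction ``$c_a\ge 0 \Rightarrow$ representable'' is immediate, since then the canonical expansion itself is a non-negative combination. For the converse, any non-negative combination is the effective length of a $p^{ae-b}$-divisible code (stack $c_i$ copies of the elementary divisible multisets of cardinality $s_q(a,b,i)$, exactly as in the constructions (2),(3),(4) underlying Lemma~\ref{lemma_implication_easy}), whence Theorem~\ref{main_thm}, implication (i)$\Rightarrow$(ii), forces $c_a\ge 0$. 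Thus the integer sought is the largest $n$ whose expansion has $c_a<0$; finiteness is automatic because $\gcd\!\big([a+1]_q,\,p^{ae-b}\big)=1$, as $[a+1]_q$ is coprime to $p$.

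Second, I would extremize over the admissible digit ranges. For any $n$ with $c_a\le -1$, the expansion constraints $c_0\le p^{e-b}-1$ and $c_1,\dots,c_{a-1}\le q-1$ give, with $\Delta:=p^{ae-b}=s_q(a,b,a)$,
$$
  n=\sum_{i=0}^{a-1} c_i s_q(a,b,i) + c_a\Delta
   \le (p^{e-b}-1)s_q(a,b,0)+(q-1)\sum_{i=1}^{a-1}s_q(a,b,i)-\Delta .
$$
This bound is attained: the digit choices $c_0=p^{e-b}-1$, $c_1=\dots=c_{a-1}=q-1$, $c_a=-1$ satisfy every range requirement and hence constitute the (unique) $S_q(a,b)$-adic expansion of the right-hand side, which therefore has negative leading coefficient. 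Consequently the Frobenius number equals exactly this maximal value.

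Third, I would simplify the closed form. Writing $s_q(a,b,i)=p^{e-b}(q^{i-1}+\dots+q^{a-1})$ and using the telescoping identity $(q-1)(q^{i-1}+\dots+q^{a-1})=q^a-q^{i-1}$, the middle sum collapses to $p^{e-b}\big[(a-1)q^a-\tfrac{q^{a-1}-1}{q-1}\big]$. Adding $(p^{e-b}-1)\tfrac{q^{a+1}-1}{q-1}$, subtracting $p^{ae-b}$, and clearing the denominator $q-1$ reduces everything to $a\,p^{e-b}q^a-\tfrac{q^{a+1}-1}{q-1}$, and since $p^{e-b}q^a=p^{(a+1)e-b}$ this is precisely the asserted value.

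The only genuine work is this final algebraic consolidation, and I expect no conceptual obstacle: the positional-system structure established for Theorem~\ref{main_thm} completely linearizes what would otherwise be a hard general multi-coin Frobenius problem. As a sanity check, the case $a=1$ has only the two coprime coins $s_q(1,b,0)=q+1$ and $s_q(1,b,1)=p^{e-b}$, and Lemma~\ref{lemma_frobenius_two_summands} yields $(q+1)p^{e-b}-(q+1)-p^{e-b}=p^{2e-b}-(q+1)$, in agreement with the formula.
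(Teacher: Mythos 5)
Your proposal is correct and follows essentially the same route as the paper: identify the largest integer whose $S_q(a,b)$-adic expansion has negative leading coefficient by taking $c_0=p^{e-b}-1$, $c_1=\dots=c_{a-1}=q-1$, $c_a=-1$, simplify the resulting sum to $a\cdot p^{(a+1)e-b}-\tfrac{q^{a+1}-1}{q-1}$, and conclude via Theorem~\ref{main_thm}. You are merely more explicit than the paper about the equivalence between representability as a non-negative combination and non-negativity of the leading coefficient (routed through code existence in both directions), which is a welcome clarification but not a different argument.
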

\begin{proof}
  Given the definition of the $S_q(a,b)$-adic expansion of an integer $n$ we conclude that the largest integer with a negative leading coefficient is given by
  \begin{eqnarray*}
    && \left(p^{e-b}-1\right)\cdot s_q(a,b,0)\,+\,\sum_{i=1}^{a-1} (q-1)\cdot s_q(a,b,i)\,+\,(-1)\cdot s_q(a,b,a) \\ 
    &=& \left(p^{e-b}-1\right)\cdot \frac{q^{a+1}-1}{q-1}\,+\, \sum_{1=1}^{a-1} p^{ie-b} \cdot \left(q^{a-i+1}-1\right) \,-\, p^{ae-b} \\ 
    &=& \left(p^{e-b}-1\right)\cdot \frac{q^{a+1}-1}{q-1}\,+\, (a-1)\cdot p^{(a+1)e-b}-p^{e-b}\cdot \sum_{i=0}^{a-2} q^i\,-\, p^{ae-b} \\ 
    &=& \left(p^{e-b}-1\right)\cdot \frac{q^{a+1}-1}{q-1} - p^{e-b}\cdot \frac{q^{a-1}-1}{q-1}+(a-1)\cdot p^{(a+1)e-b}-p^{ae-b} \\ 
    &=& p^{e-b}\cdot q^{a-1}\cdot (q+1) -\frac{q^{a+1}-1}{q-1} +(a-1)\cdot p^{(a+1)e-b}-p^{ae-b} \\ 
    &=& a\cdot p^{(a+1)e-b}-\frac{q^{a+1}-1}{q-1}.
  \end{eqnarray*}
  With this, the stated result is implied by Theorem~\ref{main_thm}.
\end{proof}
In Examples (\ref{ex_old_1})-(\ref{ex_old_3}) the corresponding numbers $g\big(s_q(a,b,0),\dots,s_q(a,b,0)\big)$ are given by $3$, $43$, and $7$.   

\section{Projective divisible codes}
\label{sec_projective}
In some applications, e.g.\ for upper bounds for partial spreads, see e.g.~\cite{honold2018partial}, the maximum point multiplicity of the multisets of points has to be $1$, i.e., we 
indeed have sets of points and the corresponding linear codes have to be projective. The possible effective lengths of $q^r$-divisible projective codes are very far from being characterized 
and only partial results are known. Here there a papers treating just one length, see \cite{kurz2020no}. The characterization problem is again finite since for every 
$(u+1)$-dimensional space $U$ and every $(u+2)$-dimensional space $U'\ge U$ we have that $\chi_U$ is a $q^u$-divisible set of cardinality $[u+1]_q$ and $\chi_{U'}-\chi_U$ (i.e., the 
characteristic function of an affine subspace) is a $q^u$-divisible set of cardinality $q^u$, so that we can apply Lemma~\ref{lemma_frobenius_two_summands} since $\gcd\!\left([u+1]_q,q^u\right)=1$.
For a recent survey on the possible lengths of $q^r$-divisible projective codes for integers $r$ we refer to \cite[Section 7]{kurz2021divisible}. First few preliminary results for the case 
of restricted column multiplicities larger than $1$ can be found in \cite{korner2023lengths}.

For non-prime field sizes $q$ Baer subspaces and the like give another construction of $\Delta$-divisible point sets:
\begin{lemma}
  Let $q=p^e$ and $1\le f<e$. Then the set of points of an $u$-space $U$ over $\operatorname{GF}(p^e)$ that is also contained in the subfield $\operatorname{GF}(p^f)$ 
  is $p^{uf-e}$ divisible with cardinality $\tfrac{p^{fu}-1}{p^f-1}$ for all $u\in\N_{\ge 3}$. 
\end{lemma}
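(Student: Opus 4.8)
The plan is to verify the divisibility directly from the hyperplane criterion recalled in the introduction: a multiset of points $\cM$ is $\Delta$-divisible precisely when $\#\cM\equiv\cM(H)\pmod\Delta$ for every hyperplane $H$. Write $q_0:=p^f$. Since $\F_{q_0}$ is a subfield of $\F_q$ we have $f\mid e$, say $e=fm$ with $m\ge 2$ (as $f<e$), so that the claimed divisibility constant is $\Delta=p^{uf-e}=q_0^{\,u-m}$. Identifying $U$ with $\F_q^u$, the subfield points form the canonical subgeometry $B\cong\PG(u-1,q_0)$ inside $\PG(u-1,q)$, and as a set this has cardinality $\#\cM=\tfrac{q_0^u-1}{q_0-1}=\tfrac{p^{fu}-1}{p^f-1}$, as asserted. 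It then remains to compute $\cM(H)$ for an arbitrary hyperplane $H$ and to check the congruence.

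First I would express $\cM(H)$ in linear-algebraic terms. A hyperplane of $\PG(u-1,q)$ is the zero set of a nonzero functional $\phi(x)=\sum_{i=1}^u a_ix_i$ with coefficients $a_i\in\F_q$, and a subfield point represented by $x\in\F_{q_0}^u$ lies on $H$ iff $\phi(x)=0$. The restriction of $\phi$ to $\F_{q_0}^u$ is an $\F_{q_0}$-linear map into $\F_q$ whose image is the $\F_{q_0}$-span $\langle a_1,\dots,a_u\rangle$; let $r$ be its dimension. By rank--nullity its kernel is a $(u-r)$-dimensional $\F_{q_0}$-subspace, so $\cM(H)=\tfrac{q_0^{u-r}-1}{q_0-1}$. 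The decisive observation is that $r\le m$, because the image lies in $\F_q$, which has $\F_{q_0}$-dimension $m$; clearly also $1\le r\le u$.

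Finally I would assemble the congruence. Subtracting the two counts gives $\#\cM-\cM(H)=\tfrac{q_0^u-q_0^{u-r}}{q_0-1}=q_0^{\,u-r}\cdot\tfrac{q_0^r-1}{q_0-1}$. As $q_0\equiv 0\pmod p$, the second factor $\tfrac{q_0^r-1}{q_0-1}=1+q_0+\dots+q_0^{r-1}\equiv 1\pmod p$ is coprime to $p$, so the $p$-adic valuation of $\#\cM-\cM(H)$ is exactly $f(u-r)$. Since $r\le m$, this is at least $f(u-m)=uf-e$, and therefore $\Delta=p^{uf-e}$ divides $\#\cM-\cM(H)$ for every hyperplane $H$, which is precisely the required divisibility.

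The only genuine content, and the step I expect to require the most care to state cleanly, is the intersection count of the second paragraph: recognizing that $\cM(H)$ is governed solely by the $\F_{q_0}$-rank $r$ of the coefficient vector $(a_1,\dots,a_u)$, and that this rank cannot exceed $m=e/f$. Once that bound is in place the valuation computation is immediate. One may also note that choosing the $a_i$ so that they span $\F_q$ over $\F_{q_0}$ (possible whenever $u\ge m$) realizes $r=m$ and shows that $p^{uf-e}$ is in fact the exact divisibility constant.
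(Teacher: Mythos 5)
Your argument is correct, and it is worth noting that the paper states this lemma without any proof at all, so there is no ``paper's proof'' to compare against --- your write-up actually supplies the missing justification. The chain of reasoning is sound: the hyperplane criterion from the introduction reduces everything to computing $\cM(H)$; the identification of $\cM(H)=\tfrac{q_0^{u-r}-1}{q_0-1}$ via the $\F_{q_0}$-rank $r$ of the coefficient vector $(a_1,\dots,a_u)$ is the right key step (including the observation that proportional representatives of a subgeometry point differ by a scalar in $\F_{q_0}$, so the point count is as for $\PG(u-1,q_0)$); and the bound $r\le m=e/f$ together with the exact valuation $v_p(\#\cM-\cM(H))=f(u-r)$ gives the claimed $p^{uf-e}$-divisibility. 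Your closing remark that $r=m$ is attained whenever $u\ge m$ even shows the divisibility constant is exact, which is more than the lemma asserts. Two minor points you might make explicit: the statement implicitly requires $f\mid e$ (which you correctly assume, since $\F_{p^f}$ must be a subfield of $\F_{p^e}$), and when $u\le m$ the exponent $uf-e$ is non-positive so the claim is vacuous --- this is consistent with, and explains, the paper's remark that the hypothesis $u\ge 3$ (more precisely $u>e/f$) is what makes the statement non-trivial.
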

Note that the assumption $u\ge 3$ is necessary since the hyperplanes of Baer lines are points and there is nothing like a {\lq\lq}Baer point{\rq\rq}, i.e., multiplicities $1$ and
$0$ both occur. Hyperplanes of Baer planes are of course Baer lines and so on.

\begin{example}
  \label{ex_projective_q_4}
  Over $\F_4$ a line gives a $2$-divisible projective code of effective length $5$ and a Baer plane gives a $2$-divisible projective code of effective length $7$. The largest 
  integer that cannot be written as a sum of $5$s and $7$s is $5\cdot 7-5-7=23$, see Lemma~\ref{lemma_frobenius_two_summands}. The linear code corresponding to an affine 
  plane is $4$-divisible and has effective length $25$. 
\end{example}

Family TF1 from \cite{calderbank1986geometry} spelled out in geometrical terms:
\begin{lemma}
  For each integer $e\ge 1$ let $q:=2^e$ and $\cM$ by a hyperoval in $\PG(3-1,q)$. Then, we have $\#\cM=q+2$, $\cM$ is $2$-divisible, and all points have multiplicity at most $1$.
\end{lemma}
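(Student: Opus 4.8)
The plan is to verify three things about a hyperoval $\cM$ in $\PG(2,q)$ with $q=2^e$: its cardinality, its $2$-divisibility, and the fact that all point multiplicities are at most $1$. The last property is immediate since a hyperoval is a \emph{set} of points, so every point carries multiplicity $0$ or $1$. The cardinality claim $\#\cM=q+2$ is the defining property of a hyperoval: recall that an oval (a set of points no three of which are collinear, equivalently meeting every line in at most two points) in $\PG(2,q)$ has exactly $q+1$ points, and for even $q$ every oval extends by a unique nucleus to a hyperoval of size $q+2$ in which every line meets $\cM$ in either $0$ or $2$ points.

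The key step is the $2$-divisibility, and here I would argue directly from the hyperplane (i.e.\ line) intersection characterization: a multiset $\cM$ of points in $\PG(v-1,q)$ is $\Delta$-divisible iff $\#\cM\equiv\cM(H)\pmod\Delta$ for every hyperplane $H$. In $\PG(2,q)$ the hyperplanes are the lines $L$. The defining property of a hyperoval for even $q$ is precisely that every line is either \emph{external}, meeting $\cM$ in $0$ points, or \emph{secant}, meeting $\cM$ in exactly $2$ points; there are no tangent lines (this is the special feature of even characteristic, where the nucleus absorbs all the tangents of the underlying oval). Hence for every line $L$ we have $\cM(L)\in\{0,2\}$, so $\cM(L)\equiv 0\pmod 2$. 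Since $\#\cM=q+2=2^e+2$ is also even, we get $\#\cM\equiv\cM(L)\equiv 0\pmod 2$ for every line $L$, which is exactly the condition for $2$-divisibility.

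I would structure the write-up by first recalling the relevant classical facts about ovals and hyperovals in even characteristic (existence of the nucleus, the dichotomy of lines into external and secant with no tangents), citing standard references on Galois geometry, and then assembling the divisibility conclusion from the line-intersection criterion as above. The main obstacle, such as it is, is not a difficult computation but rather pinning down the correct definition of hyperoval being used and invoking the characteristic-two phenomenon cleanly: everything hinges on the fact that in $\PG(2,2^e)$ a hyperoval meets every line in an even number of points, and once that is stated the divisibility follows in one line. No induction or appeal to the earlier lemmas of this paper is needed; the argument is self-contained modulo the standard theory of ovals.
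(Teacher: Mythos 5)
Your proof is correct. The paper itself gives no proof of this lemma (it is stated as family TF1 of \cite{calderbank1986geometry} ``spelled out in geometrical terms''), and your argument --- every line meets a hyperoval of $\PG(2,q)$, $q$ even, in $0$ or $2$ points, so $\cM(L)\equiv 0\equiv q+2\equiv\#\cM\pmod 2$ for every line $L$, while the multiplicity bound is immediate because a hyperoval is a set --- is exactly the standard verification the author is implicitly relying on.
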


Taking lines through a common point without the intersection point yields:
\begin{lemma}
  \label{lemma_two_lines}
  Let $q=p^f$ and $1<e<f$ be an integer. With this let $P$ be a point and $L_1,\dots,L_{p^e}$ be pairwise different lines through $P$ in $\PG(2,q)$. The point set $\cM$ is 
  defined by $\cM(Q)=1$ iff $Q\neq P$ and there exists an index $1\le i\le p^e$ with $Q\le L_i$ and $\cM(Q)=0$ otherwise. Then, we have $\#\cM=p^eq$ and $\cM$ is $p^e$-divisible. 
\end{lemma}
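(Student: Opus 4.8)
The plan is to verify the claim by a direct counting argument, computing $\#\cM$ explicitly and then establishing $p^e$-divisibility by checking the defining congruence $\#\cM \equiv \cM(H) \pmod{p^e}$ for every line $H$ in $\PG(2,q)$ (since hyperplanes in the plane are lines). First I would count the points. Each line $L_i$ over $\F_q$ contains $q+1$ points, and removing the common intersection point $P$ leaves $q$ points on each $L_i$. Since the $L_i$ are pairwise distinct lines through $P$, any two of them meet only in $P$, so the punctured lines $L_i \setminus \{P\}$ are pairwise disjoint. Hence $\#\cM = p^e \cdot q$, as claimed.

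The main work is the divisibility verification, so next I would fix an arbitrary line $H$ and compute $\cM(H)$ by case analysis. There are two cases according to whether $H$ passes through $P$ or not. If $H$ passes through $P$, then $H$ is either one of the $L_i$ or a line through $P$ distinct from all of them; in the former case $\cM(H) = q$ (the punctured line, since $P \notin \cM$), and in the latter case $H$ meets each $L_i$ only at $P$, so $\cM(H) = 0$. If $H$ does not pass through $P$, then $H$ meets each line $L_i$ in exactly one point $Q_i \neq P$, and these intersection points are pairwise distinct because a point lying on two of the $L_i$ would have to be $P$; thus $\cM(H)$ counts exactly these $p^e$ points, giving $\cM(H) = p^e$.

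Finally I would assemble the congruence. Since $\#\cM = p^e q$ and $q = p^f$ with $f > e \ge 1$, we have $p^e \mid \#\cM$, so $\#\cM \equiv 0 \pmod{p^e}$. In each of the three sub-cases the value $\cM(H) \in \{0, q, p^e\}$ is divisible by $p^e$ (using $p^e \mid q$ in the case $\cM(H)=q$), hence $\cM(H) \equiv 0 \equiv \#\cM \pmod{p^e}$ for every line $H$. This establishes the $p^e$-divisibility. I expect the only genuinely delicate point to be the disjointness bookkeeping—ensuring that the punctured lines share no common point other than $P$ and that a transversal line $H$ meets them in $p^e$ distinct points—but this follows immediately from the fact that two distinct lines in $\PG(2,q)$ meet in exactly one point. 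The assumption $1 < e < f$ guarantees $1 \le p^e < q$, so that choosing $p^e$ pairwise distinct lines through $P$ is possible (there are $q+1 > p^e$ such lines) and the divisibility constant $p^e$ is a proper power making the statement nontrivial.
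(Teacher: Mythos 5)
Your proof is correct, and it follows exactly the direct verification that the paper intends (the lemma is stated there without proof, introduced only by the remark that one takes lines through a common point and removes the intersection point): count $\#\cM = p^e q$ via the pairwise disjointness of the punctured lines, then check $\cM(H)\in\{0,q,p^e\}$ for the three types of lines $H$, each divisible by $p^e$ since $p^e \mid q$. Nothing is missing.
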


\begin{proposition}
  \label{prop_2_div_q_4_projective}
  A $2$-divisible projective code of effective lengths $n\ge 1$ over $\F_4$ exists iff $n\ge 5$. 
\end{proposition}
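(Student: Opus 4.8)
The plan is to prove the two implications separately: the construction direction by assembling a handful of explicit configurations in $\PG(2,4)$ and then taking disjoint unions, and the nonexistence direction by elementary hyperplane counting for the four small lengths $n\in\{1,2,3,4\}$.

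For the forward direction I would first record the basic projective $2$-divisible building blocks over $\F_4$, all living in $\PG(2,4)$: a line is $4$-divisible (hence $2$-divisible) with $5$ points by construction (4); a hyperoval is $2$-divisible with $6$ points by the hyperoval lemma; and a Baer subplane is $2$-divisible with $7$ points by the Baer-subspace lemma applied with $u=3$. Next I would observe that disjoint unions preserve the property: placing finitely many $2$-divisible projective sets in pairwise skew subspaces of a large $\PG(V-1,4)$ produces, by constructions (1) and (2), a $2$-divisible multiset whose support is a disjoint union of the given supports, hence still a set, with cardinalities adding. Since $\langle 5,6,7\rangle$ is a numerical semigroup whose only gaps are $\{1,2,3,4,8,9\}$, every integer $n\ge 10$ together with $n\in\{5,6,7\}$ is realized by such a disjoint union of lines, hyperovals, and Baer subplanes. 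It then remains only to realize the two exceptional lengths $8$ and $9$ directly.

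The length-$8$ and especially the length-$9$ construction are the main obstacle, since neither can arise as a disjoint union of the basic blocks. For $n=8$ I would take two distinct lines $\ell_1,\ell_2$ through a common point $P$ and set $\cM=(\ell_1\cup\ell_2)\setminus\{P\}$; checking the four types of line (equal to $\ell_1$ or $\ell_2$, passing through $P$ but distinct from them, or missing $P$) shows every line meets $\cM$ in an even number, so $\cM$ is $2$-divisible of size $8$. For $n=9$ I would take three lines $\ell_1,\ell_2,\ell_3$ in general position (a triangle) and let $\cM$ be the set of points on exactly one of them, i.e.\ the $12$ points of $\ell_1\cup\ell_2\cup\ell_3$ minus the three vertices, so $\#\cM=9$. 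The crucial verification is that for any line $m$, counting modulo $2$ gives $\cM(m)\equiv\sum_{i=1}^{3}|m\cap\ell_i|$; when $m\neq\ell_i$ for all $i$ this equals $3\equiv 1$, and when $m=\ell_i$ it counts the $3$ non-vertex points of $\ell_i$, so every line meets $\cM$ oddly and $\cM$ is $2$-divisible. (Both constructions are the symmetric differences of two, respectively three, lines.)

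For the converse I must rule out $n\in\{1,2,3,4\}$. The odd cases $n\in\{1,3\}$ follow from Lemma~\ref{lemma_small_hyperplane}: a nonempty $2$-divisible set of odd cardinality meets every hyperplane in an odd, hence positive, number, while some hyperplane $H$ satisfies $\cM(H)<n/4$, forcing $n>4$ (alternatively $n=1,3$ already fail for multisets by Theorem~\ref{main_thm}). The even cases $n\in\{2,4\}$ are genuine projective obstructions, since $2$-divisible multisets of these lengths do exist, so I would argue by the dimension $k$ of the span and exhibit a hyperplane of odd intersection. For $k=2$ a point-hyperplane of $\PG(1,4)$ has multiplicity $1$; for $k=4$ three non-collinear points span a plane that is itself a hyperplane missing the fourth point, meeting $\cM$ in $3$; and for $k=3$ in $\PG(2,4)$ either some three points are collinear (a line meeting $\cM$ in $3$) or the four points are in general position, in which case an incidence count through one point shows that two of its $q+1=5$ lines are tangent, meeting $\cM$ in a single point. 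The only delicate step is this last finite check, which uses $q+1>3$ and settles the remaining even case.
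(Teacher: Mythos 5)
Your proposal is correct, and its overall architecture matches the paper's: exhibit base examples of lengths $5,6,7,8,9$ and observe that every $n\ge 5$ is a non-negative integer combination of these (your reduction via the numerical semigroup $\langle 5,6,7\rangle$ with gaps $\{1,2,3,4,8,9\}$ is a slightly sharper way of saying the same thing), then rule out $n\in\{1,2,3,4\}$. You differ in two places, both in the direction of being more self-contained. First, for $n=9$ the paper cites family RT3 of Calderbank--Kantor, whereas you build the example explicitly as the points lying on exactly one line of a triangle (equivalently, reduce $\chi_{\ell_1}+\chi_{\ell_2}+\chi_{\ell_3}$ modulo $2$ at the three vertices); your parity check of the four line types is correct, and your $n=8$ example coincides with the paper's Lemma~\ref{lemma_two_lines}. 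Second, for the nonexistence the paper simply appeals to exhaustive computer enumeration (remarking afterwards that theoretical arguments exist for $n\in\{1,2,3\}$ via Theorem~\ref{main_thm} and Lemma~\ref{lemma_dimension_2}); you instead give a complete hand argument, handling the odd lengths by combining Lemma~\ref{lemma_small_hyperplane} with the fact that every hyperplane must meet an odd-cardinality $2$-divisible set, and the even lengths by a case split on the dimension of the span, where the only nontrivial case is four points in general position in $\PG(2,4)$ and the tangent-line count through one of them finishes the job. What your route buys is a fully computer-free and citation-light proof; what the paper's route buys is brevity and, via the enumeration, additional classification information for these small lengths.
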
 
\begin{proof}
  As mentioned in Example~\ref{ex_projective_q_4} $n=5$ and $n=7$ can be attained. Further examples are given by families TF1 and RT3 in \cite{calderbank1986geometry} for 
  $n=6$ and $n=9$, respectively. For $n=8$ and example is given by Lemma~\ref{lemma_two_lines}.  
  It can be easily checked that all $n\ge 5$ can be written as a non-negative integer linear combination of the numbers $5$, $6$, $7$, $8$, and $9$. The non-existence 
  for $n\in\{1,2,3,4\}$ can e.g.\ be checked by exhaustive enumeration (since the maximum possible dimension is $4$ for these lengths).
\end{proof}
We remark that it is also not too hard to give purely theoretical non-existence proofs for $n\in\{1,2,3,4\}$. As observed in Example~\ref{ex_old_1} we can use Theorem~\ref{main_thm} 
to exclude $n\in\{1,3\}$. The non-existence for $n=2$ follows e.g.\ from the classification result in Lemma~\ref{lemma_dimension_2}. 

In order to get e.g.\ the full picture of the possible effective lengths of $2$-divisible projective codes over $\F_8$ we may simply enumerate all such codes using a computer program. To this end 
we have used \texttt{LinCode} \cite{bouyukliev2021computer}, see Table~\ref{table_2_div_q_8_projective}. For field sizes $q=p^e$ we represent the field elements by polynomials over $\F_p$ modulo an irreducible polynomial $f$ of degree $e$. 
Here we use the Conway polynomials $f(\alpha)=\alpha^2+\alpha+1$, $f(\alpha)=\alpha^3+\alpha+1$, and $f(\alpha)=\alpha^2+2\alpha+2$ for $q=4$, $q=8$, and $q=9$, respectively. For an even more 
compact representation we replace $\sum_{i=0}^{e-1} c_i\alpha^i$ by the integer $\sum_{i=0}^{e-1}c_ip^i$.     

\begin{table}[htp]
  \begin{center}
    \begin{tabular}{lrrrrrrrrrrr}
      \hline
      n  & 9 & 10 & 12 & 13 & 14 & 15 & 16 & 17 & 18 & 19 & 20 \\
      \# & 1 &  1 &  1 &  1 &  1 &  3 &  7 &  8 & 20 & 35 & 91 \\  
      \hline
    \end{tabular}
    \caption{Number of non-isomorphic $2$-divisible projective $[n,3]_8$-codes.}
    \label{table_2_div_q_8_projective}  
  \end{center}
\end{table} 

Whenever we have a construction for a $\Delta$-divisible projective code over $\F_q$ of length $n$ such that we do not know such codes with smaller lengths $n_1,n_2$ satisfying $n=n_1+n_2$, 
we speak of a {\lq\lq}base example{\rq\rq}. So, for $\Delta=2$ and $q=4$ we have stated base examples for $n\in\{5,6,7,8,9\}$. For $\Delta=2$ and $q=8$ base examples are given by a line for 
$n=9$, a hyperoval for $n=10$, and one example for $n=16$ constructed via Lemma~\ref{lemma_two_lines}. Besides that we have computationally found the following base examples: 
\begin{center}
\begin{tabular}{ccccc}
  $n=12$: & $n=13$: & $n=14$: & $n=15$: & $n=17$:\\ 
    $\!\!\!\!\begin{pmatrix}
                  111111111100\\
                  001234567010\\
                  136547277001
                \end{pmatrix}\!\!\!\!$
 &  $\!\!\!\!\begin{pmatrix}
                  1111111110100\\
                  0225555661010\\
                  3370237236001
                \end{pmatrix}\!\!\!\!$
 & $\!\!\!\!\begin{pmatrix}
                  11111111111100\\
                  00111234567010\\
                  26124752344001
                \end{pmatrix}\!\!\!\!$
& $\!\!\!\!\begin{pmatrix}
                  111111111110100\\
                  011223366771010\\
                  656675667061001
                \end{pmatrix}\!\!\!\!$
& $\!\!\!\!\begin{pmatrix}
                  11111111111110100\\
                  00022335566771010\\
                  23737032634461001
                \end{pmatrix} \!\!\!\!$                                                             
\end{tabular}
\end{center}
We also found $4$-dimensional $2$-divisible points over $\F_8$ of cardinalities $14$, $16$, $17$, and $18$. In Proposition~\ref{prop_2_div_q_8_projective} we 
fully characterize the possible lengths of $2$-divisible projective codes over $\F_8$.

In the following we state partial results for further divisibility constants $\Delta$ and field sizes $q$.
{\lq\lq}Base examples{\rq\rq} for $4$-divisible projective codes over $\F_8$ 
are given by a line for $n=9$, a two-weight code for $n=28$ \cite[ConstructionTF2]{calderbank1986geometry}, and Lemma~\ref{lemma_two_lines}\ gives an example for $n=32$.  


{\lq\lq}Base examples{\rq\rq} for $2$-divisible projective codes over $\F_{16}$ 
are given by a line for $n=17$, a hyperoval for $n=18$, two-weight codes for $n\in\{21,52,65\}$ \cite{calderbank1986geometry}, and Lemma~\ref{lemma_two_lines}\ gives an example for $n=32$.   


{\lq\lq}Base examples{\rq\rq} for $4$-divisible projective codes over $\F_{16}$
are given by a line for $n=17$, two-weight codes for $n\in\{21,52,65\}$ \cite{calderbank1986geometry}, and Lemma~\ref{lemma_two_lines}\ gives an example for $n=64$.


{\lq\lq}Base examples{\rq\rq} for $8$-divisible projective codes over $\F_{16}$
are given by a line for $n=17$, two-weight codes for $n\in\{120,153,257\}$ \cite{calderbank1986geometry}, and Lemma~\ref{lemma_two_lines}\ gives an example for $n=128$.


{\lq\lq}Base examples{\rq\rq} for $3$-divisible projective codes over $\F_{9}$
are given by a line for $n=10$, a Baer plane for $n=13$, a two-weight code for $n=28$ \cite[Construction RT4]{calderbank1986geometry}, \cite[Example 4.4]{ball2007linear} 
for $n=24$, and the following two codes found by computer enumerations:   
\begin{center}
\begin{tabular}{cc}
  $n=27$: & $n=31$: \\ 
  $\begin{pmatrix}
                111111111111111111111110100\\
                000011223344556666677881010\\
                125628242438071345624474001 
                \end{pmatrix}$
  & $\begin{pmatrix}
                1111111111111111111111111100100\\
                0000011122233344455577788811010\\
                2467802514724612605827812514001     
                \end{pmatrix}$
\end{tabular}
\end{center}

Note that there are no $3$-divisible multisets of points over $\F_9$ with a cardinality in $\{1,2,4,5,7,8,11\}$.

\begin{theorem} (\cite[Theorem 2.1 and Theorem 3.3]{ball2007linear})
  Let $1 <r<q =p^h$ and $\mathcal{S}$ be an $r$-divisible set of points in $\PG(k-1,q)$ whose cardinality is divisible by $r$. Then, we have $|\mathcal{S}|\ge (r-1)q+(p-1)r$
\end{theorem}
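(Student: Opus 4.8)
The plan is to first reformulate the hypothesis geometrically. Since $\cS$ is a \emph{set}, Ward's repetition theorem (recalled in the introduction) forbids any factor of $r$ that is coprime to $p$ and larger than $1$, because a non-trivial repetition would create point multiplicities $\ge 2$; hence $r=p^s$ with $1\le s<h$. Because $r\mid\#\cS$ and $\cS$ is $r$-divisible, every hyperplane $H$ satisfies $\cS(H)\equiv\#\cS\equiv 0\pmod r$, i.e.\ every hyperplane meets $\cS$ in a multiple of $r$ points. I would also pass to the span, so that $\cS$ is spanning (the asserted bound does not involve the ambient dimension). Now split according to whether some hyperplane is disjoint from $\cS$. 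If every hyperplane meets $\cS$, then Lemma~\ref{lemma_small_hyperplane} produces a hyperplane $H_0$ with $\cS(H_0)<\#\cS/q$; since $\cS(H_0)>0$ is a multiple of $r$ we get $r\le\cS(H_0)$, whence $n:=\#\cS>qr$. As $r=p^s\le q/p$ we have $(p-1)r\le q-q/p<q$, so $qr>(r-1)q+(p-1)r$ and the bound holds with room to spare. Thus the extremal configurations, and the real difficulty, live in the remaining case.

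In the hard case there is a hyperplane $H_\infty$ disjoint from $\cS$, so $\cS\subseteq\PG(k-1,q)\setminus H_\infty\cong\AG(k-1,q)$ and every \emph{affine} hyperplane meets $\cS$ in a multiple of $r=p^s$. Here I would invoke the polynomial (Rédei) method. Writing $\cS=\{P_1,\dots,P_n\}\subseteq\F_q^{k-1}$, form the Rédei polynomial $R(T,\mathbf{Y})=\prod_{j=1}^n\bigl(T-\langle\mathbf{Y},P_j\rangle\bigr)\in\F_q[T,\mathbf{Y}]$. For each fixed direction $\mathbf{a}\in\F_q^{k-1}\setminus\{0\}$, the multiplicity of a root $T=c$ of $R(T,\mathbf{a})$ is exactly the number of points of $\cS$ on the affine hyperplane $\langle\mathbf{a},x\rangle=c$, which is divisible by $p^s$; hence $R(T,\mathbf{a})$ is a perfect $p^s$-th power in $\F_q[T]$. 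Expanding $R(T,\mathbf{Y})=\sum_{i=0}^n(-1)^i\sigma_i(\mathbf{Y})\,T^{n-i}$, where $\sigma_i$ is the $i$-th elementary symmetric polynomial of the linear forms $\langle\mathbf{Y},P_j\rangle$, the $p^s$-power property together with $p^s\mid n$ forces $\sigma_i(\mathbf{a})=0$ for all $\mathbf{a}$ whenever $p^s\nmid i$; for $i<q$ the homogeneous form $\sigma_i$ is reduced, so $\sigma_i\equiv0$ identically. Feeding this into $\partial R/\partial T$ and using $p\mid n$ shows that the coefficients of $T^{n-1},\dots,T^{n-q}$ all vanish, so $\partial R/\partial T$ carries a lacunary gap of length $q$ at the top, while separability over $\F_q(\mathbf{Y})$ (the $P_j$ are distinct, so the linear forms are distinct) guarantees $\partial R/\partial T\neq0$ as a polynomial.

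The final and hardest step is to convert this lacunary structure into the sharp bound $n\ge(r-1)q+(p-1)r$. This is precisely the regime governed by the theory of fully reducible lacunary polynomials over $\F_q$ (Rédei, Blokhuis--Szőnyi): after specializing at a generic direction $\mathbf{a}$ at which $\partial R/\partial T$ survives (passing to an extension field if needed), one studies a $p^s$-th root of $R(T,\mathbf{a})$, a fully reducible polynomial whose admissible ``tail'' cannot be too small and whose permissible sizes jump in steps dictated by the subfields $\F_p\subseteq\F_q$. I expect the main obstacle to be exactly here: squeezing out the additive constant $(p-1)r$, whose appearance is the fingerprint of the Frobenius/subfield analysis, and controlling the degenerate situations (where the generic direction makes the specialized derivative vanish, or where $\cS$ collapses into a low-dimensional affine subspace) so that the one-variable lacunary estimate genuinely governs the general affine configuration. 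Once that estimate is secured, combining it with $p^s\mid n$ pins down the threshold and, together with the easy case above, completes the proof.
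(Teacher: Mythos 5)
First, a point of context: the paper does not prove this theorem at all --- it is quoted verbatim from \cite[Theorems 2.1 and 3.3]{ball2007linear} --- so there is no internal proof to measure you against; your proposal has to stand on its own, and it should be compared with the argument of that source, which is indeed the polynomial method you describe.

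Your preparatory reductions are sound: Ward's repetition theorem does force $r=p^s$ for a genuine set, the case in which every hyperplane meets $\cS$ is dispatched by Lemma~\ref{lemma_small_hyperplane} exactly as you say, and in the remaining affine case the R\'edei polynomial $R(T,\mathbf{a})$ is a $p^s$-th power for every direction $\mathbf{a}$, giving $\sigma_i\equiv 0$ for $p^s\nmid i$, $i<q$, hence $\partial R/\partial T\neq 0$ with $\deg_T \partial R/\partial T\le n-1-q$. Up to this point you have faithfully reconstructed the setup of \cite{ball2007linear}. The genuine gap is the step you explicitly defer: converting this lacunary, fully reducible structure into the bound $n\ge(r-1)q+(p-1)r$. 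That conversion is not a routine appeal to a known lemma --- it is the entire content of the theorem. On its own, the degree gap only yields $n\ge q+1$ (otherwise the derivative would vanish), which is far weaker than the claim for $r>2$; the constant $(p-1)r$, the analysis of the $p$-th power components of the specialized one-variable polynomial and of the subfields in which the exponent gaps live, the handling of directions where the specialized derivative degenerates, and the passage from the planar case ($k=3$, their Theorem 2.1) to general $k$ (their Theorem 3.3) are precisely the new work of \cite{ball2007linear}, none of which is supplied here. So the proposal is a correct and well-aimed plan that identifies the right machinery, but it is not a proof: its decisive estimate is asserted as an expectation rather than established.
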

If $q$ is even, than the maximal arcs in $\PG(2,q)$ attain the bound of the theorem and they indeed exist for all possible $r$ \cite{denniston1969some}.  
In \cite[Example 4.4]{ball2007linear}) an $r$-divisible set of points in $\PG(2,r^t)$ of cardinality $r^{t+1}-r^{t-1}-r^{t-1}-\dots-r$ was stated. For $t=2$ this yields 
cardinality $r^3-r$.


\bigskip

In the remaining part of this section we consider $2$-divisible (multi-)sets of points over $\F_q$.
\begin{lemma}
  Let $\cM$ be a $2$-divisible multiset of points in $\PG(v-1,q)$. If $q\equiv 1\pmod 2$, then there exists a multiset of points $\cM'$ in $\PG(v-1,q)$ such that $\cM=2\cM'$, so that
  especially $\#\cM\equiv 0\pmod 2$.
\end{lemma}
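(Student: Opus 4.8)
The plan is to prove the stronger pointwise statement that every point multiplicity $\cM(P)$ is even; the lemma then follows at once by setting $\cM'(P):=\cM(P)/2$, and in particular $\#\cM\equiv 0\pmod 2$. Recall that $2$-divisibility of $\cM$ means $\cM(H)\equiv\#\cM\pmod 2$ for every hyperplane $H$, equivalently that the complementary affine space $\overline H$ (the points off $H$) satisfies $\cM(\overline H)=\#\cM-\cM(H)\equiv 0\pmod 2$. If $v=1$ the ambient space is a single point and the claim is immediate, since the associated code is a repetition code and $\#\cM$ must then be even; so I would assume $v\ge 2$ from now on.

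Next I would fix an arbitrary point $Q$ and isolate the parity of $\cM(Q)$ by a double count. Consider
\[
  X:=\sum_{\substack{H\text{ hyperplane}\\ Q\not\le H}}\bigl(\#\cM-\cM(H)\bigr)=\sum_{\substack{H\\ Q\not\le H}}\cM(\overline H).
\]
Every summand is even by $2$-divisibility, so $X\equiv 0\pmod 2$. On the other hand, exchanging the order of summation gives $X=\sum_P \cM(P)\cdot a_P$, where $a_P$ denotes the number of hyperplanes $H$ satisfying both $Q\not\le H$ and $P\not\le H$.

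The crux is the parity of the coefficients $a_P$, and this is precisely where the hypothesis $q\equiv 1\pmod 2$ enters. Using that a point lies in $[v-1]_q$ hyperplanes and a line lies in $[v-2]_q$ hyperplanes, inclusion--exclusion yields $a_Q=[v]_q-[v-1]_q=q^{v-1}$ and, for $P\ne Q$,
\[
  a_P=[v]_q-2[v-1]_q+[v-2]_q=q^{v-2}(q-1).
\]
Since $q$ is odd, $q^{v-1}$ is odd while $q^{v-2}(q-1)$ is even, so modulo $2$ the double count collapses to $X\equiv\cM(Q)\pmod 2$. Combining this with $X\equiv 0\pmod 2$ gives $\cM(Q)\equiv 0\pmod 2$, and since $Q$ was arbitrary every multiplicity is even, which finishes the argument. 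I expect the only genuine obstacle to be the coefficient computation, namely verifying $a_P=q^{v-2}(q-1)$ and its parity; everything else is bookkeeping. It is worth recording that for even $q$ the factor $q-1$ becomes odd and the argument breaks down, which is consistent with the existence of $2$-divisible sets of odd cardinality in even characteristic, such as a line over $\F_4$.
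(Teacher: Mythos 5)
Your argument is correct, but it takes a genuinely different route from the paper. The paper disposes of this lemma in one line by invoking Ward's repetition theorem \cite{ward1981divisible}, already quoted in the introduction: if $t$ divides $\Delta$ and is coprime to $q$, every $\Delta$-divisible code over $\F_q$ is a $t$-fold repetition; with $t=\Delta=2$ and $q$ odd this immediately gives $\cM=2\cM'$. You instead prove the pointwise evenness of $\cM(Q)$ directly by a parity double count over the hyperplanes avoiding $Q$, and your coefficient computation checks out: $a_Q=[v]_q-[v-1]_q=q^{v-1}$ is odd and, for $P\neq Q$, $a_P=[v]_q-2[v-1]_q+[v-2]_q=q^{v-2}(q-1)$ is even precisely because $q$ is odd (the degenerate cases $v=1,2$ are handled correctly, since the line $\langle P,Q\rangle$ lies in $[v-2]_q$ hyperplanes even when that number is $0$). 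What the paper's route buys is brevity and generality -- Ward's theorem covers arbitrary $t$ coprime to $q$, not just $t=2$ -- at the cost of importing a nontrivial external result. What your route buys is a short, self-contained, purely geometric proof of exactly the special case needed; it is in effect an elementary proof of the $t=\Delta=2$ instance of Ward's theorem, and your closing observation about where the argument fails for even $q$ correctly locates the role of the hypothesis.
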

\begin{proof}
  Since $2$ does not divide $q$ for $q\equiv 1\pmod 2$ the stated result is implied by \cite{ward1981divisible}.
\end{proof}

\begin{lemma}
  \label{lemma_decomposition_2_div}
  Let $\cM$ be a $2$-divisible multiset of points in $\PG(v-1,q)$. Then there exist $s$ points $P_1,\dots,P_s$ and $t$ $2$-divisible sets of points $B_1,\dots,B_t$ such that
  $\cM=\sum_{i=1}^s 2\cdot \chi_{P_i} \,+\, \sum_{i=1}^t \chi_{B_i}$.  
\end{lemma}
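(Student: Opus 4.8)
The plan is to reduce everything to the parity of the multiplicities. For each point $P$ define $\cB(P):=\cM(P)\bmod 2\in\{0,1\}$, so that $\cB$ is an ordinary \emph{set} of points, namely the locus where $\cM$ has odd multiplicity. The heart of the argument is the claim that $\cB$ is itself $2$-divisible; granting this, the desired decomposition falls out immediately by subtraction, with a single set $B_1:=\cB$ absorbing the entire odd part of $\cM$.

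To verify that $\cB$ is $2$-divisible I would work directly from the hyperplane characterization recalled in the introduction, namely that a multiset $\cA$ is $2$-divisible iff $\#\cA\equiv\cA(H)\pmod 2$ for every hyperplane $H$. Since reduction modulo $2$ commutes with summation, for every hyperplane $H$ we have $\cB(H)=\sum_{P\le H}\cB(P)\equiv\sum_{P\le H}\cM(P)=\cM(H)\pmod 2$, and likewise $\#\cB\equiv\#\cM\pmod 2$. As $\cM$ is $2$-divisible we know $\cM(H)\equiv\#\cM\pmod 2$, and stringing these three congruences together yields $\cB(H)\equiv\#\cB\pmod 2$ for every $H$, which is exactly $2$-divisibility of $\cB$.

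With $\cB$ in hand I would finish as follows. By construction $\cM(P)-\cB(P)$ is a non-negative even integer for every point $P$, so $\cM-\cB=2\cT$ for a genuine multiset of points $\cT$ with $\cT(P)=\bigl(\cM(P)-\cB(P)\bigr)/2\in\N$. Expanding $2\cT$ point by point gives $2\cT=\sum_{i=1}^{s}2\cdot\chi_{P_i}$, where the list $P_1,\dots,P_s$ contains each point $P$ exactly $\cT(P)$ times and $s=\#\cT$. Setting $t:=1$ and $B_1:=\cB$ (or $t:=0$ when $\cB$ is empty) we obtain $\cM=\sum_{i=1}^{s}2\chi_{P_i}+\chi_{B_1}$, as required.

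There is essentially no combinatorial obstacle here: the only genuine content is the observation that passing to multiplicities modulo $2$ preserves $2$-divisibility, which is immediate from the hyperplane criterion, together with the trivial remark that $\cM-\cB$ has non-negative even multiplicities. I would note that the same idea can instead be run iteratively, peeling off one $2$-divisible set at a time and thereby producing a decomposition with larger $t$; the mod-$2$ reduction simply realizes the sharpest version, in which a single set $B_1$ carries the whole odd part of $\cM$. For odd $q$ this recovers the earlier fact that $\cM=2\cM'$, since there the odd part $\cB$ is forced to be empty.
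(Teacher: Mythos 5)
Your proof is correct and arrives at exactly the same decomposition as the paper: double points plus a single $2$-divisible set carrying the odd part ($t\le 1$). The paper reaches it by iteratively subtracting $2\cdot\chi_Q$ at points of multiplicity at least $2$ (each subtraction visibly preserving $2$-divisibility), whereas you perform the mod-$2$ reduction in one step and verify $2$-divisibility of the residual set via the hyperplane criterion --- a purely stylistic difference.
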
  
\begin{proof}
  If $Q$ is a point with multiplicity $\cM(Q)\ge 2$, then $\cM-2\cdot \chi_{Q}$ is also $2$-divisible, so that we can assume that $\cM$ is a $2$-divisible set of points after 
  some points with multiplicity $2$ have been removed.
\end{proof}

A direct specialization of Lemma~\ref{lemma_dimension_2} is:
\begin{lemma}
  Let $\cM$ be a $2$-divisible multiset of points in $\PG(1,q)$ where $q$ is even, then we have $\cM=\sum_{i=1}^s 2\cdot\chi_{P_i}+t\cdot\chi_L$ for some points $P_1,\dots,P_s$ 
  (which may coincide) and the line $L$ forming the ambient space. 
\end{lemma}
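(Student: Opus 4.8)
The plan is to specialize Lemma~\ref{lemma_dimension_2} to the case $\Delta = 2$ with $q$ even, and then interpret the resulting divisibility condition on the coefficient $s$.

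First I would invoke Lemma~\ref{lemma_dimension_2} directly with $\Delta = 2$. It guarantees the existence of $l$ points $P_1,\dots,P_l$ (possibly coinciding) and a non-negative integer $s'$ with $\cM = \sum_{i=1}^l 2\chi_{P_i} + s'\chi_L$, where $L$ is the ambient line and $s' = (\#\cM - 2l)/(q+1)$. This already matches the desired form, up to renaming: the points $P_i$ appear with multiplicity $2$, and $L$ appears with some non-negative integer coefficient $s'$, which I would rename to $t$. So the structural decomposition is immediate from the general lemma.

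The only remaining content is to confirm that the coefficient $t$ is genuinely a non-negative integer and that no hidden obstruction arises from the final divisibility clause of Lemma~\ref{lemma_dimension_2}. That clause states that $\Delta$ divides $qs'$; with $\Delta = 2$ and $q$ even, $qs'$ is automatically even, so the condition $2 \mid qs'$ is satisfied for every value of $s'$ and imposes no restriction. Hence every non-negative integer $t = s'$ is admissible, and the decomposition $\cM = \sum_{i=1}^s 2\chi_{P_i} + t\chi_L$ holds with $s = l$.

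There is essentially no obstacle here: the statement is a pure specialization, and the content is that the side condition ``$\Delta$ divides $qs$'' becomes vacuous once $q$ is even and $\Delta = 2$. The only care I would take is to note that $L = \operatorname{PG}(1,q)$ is the full ambient space, so $\chi_L$ is the $q$-divisible (hence, since $q$ is even, $2$-divisible) characteristic function of the entire line, which is exactly the object Lemma~\ref{lemma_dimension_2} produces. Thus the proof is a one-line appeal to the earlier classification.

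\begin{proof}
  This is the special case $\Delta = 2$ of Lemma~\ref{lemma_dimension_2}. That lemma yields points $P_1,\dots,P_s$ and a non-negative integer $t := \left(\#\cM - 2s\right)/(q+1)$ with $\cM = \sum_{i=1}^s 2\chi_{P_i} + t\chi_L$, subject to the condition that $2$ divides $qt$. Since $q$ is even, $qt$ is always even, so this condition holds for every $t$ and imposes no further restriction.
\end{proof}
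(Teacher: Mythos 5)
Your proof is correct and is exactly the paper's approach: the paper states this lemma as ``a direct specialization of Lemma~\ref{lemma_dimension_2}'' and gives no further proof, which is precisely your one-line appeal (the observation that the side condition $2\mid qt$ becomes vacuous for even $q$ is accurate, though not even needed, since the specialized statement simply omits that clause and is therefore weaker).
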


We call a multiset $\cM$ in $\PG(v-1,q)$ \emph{spanning} if the points with strictly positive multiplicity span the entire ambient space $\PG(v-1,q)$. For a multiset $\cM$ in $V$ and a point $Q$ 
in $V$ the projection $\cM_Q$ is the multiset of points in $V/Q$ with $\cM_Q(L/Q)=\cM(L)-\cM(Q)$ for each line $L\ge Q$ in $V$. It can be easily verified that if $\cM$ is $\Delta$-divisible, then 
$\cM_Q$ is $\Delta$-divisible with cardinality $\#\cM-\cM(Q)$. The maximum point multiplicity may increase up to a factor of $q$. If $\cM$ is spanning, so is $\cM_Q$. 

\begin{proposition}
  \label{prop_lb_2_div}
  Let $\cS$ be a $2$-divisible set of points in $\PG(v-1,q)$ for even field size $q$. Then we have $\#\cS\ge q+1$. Moreover, if $\#\cS=q+1$, then $\cS$ is the characteristic function of a 
  line and if $\#\cS=q+2$, then $\cS$ is the characteristic function of a hyperoval.
\end{proposition}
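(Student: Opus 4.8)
The plan is to treat the three assertions in increasing order of difficulty, proving the bound $\#\cS\ge q+1$ first (assuming, as is implicit in the statement, that $\cS$ is non-empty) and then bootstrapping the two equality characterizations from it by projection. Throughout, the two engines are Lemma~\ref{lemma_small_hyperplane} (a hyperplane of small weight exists) and the projection $\cS_Q$ from a point $Q$, which is again $2$-divisible, together with the decomposition of Lemma~\ref{lemma_decomposition_2_div}. Note that a single point is never $2$-divisible (its code carries a weight-$1$ codeword), so $\cS$ always spans at least a line and hyperplanes are available.

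For the lower bound I would argue by strong induction on $n:=\#\cS$ that no non-empty $2$-divisible set has $1\le n\le q$. If $n$ is odd then $2$-divisibility forces $\cS(H)\equiv n\equiv 1\pmod 2$, so every hyperplane meets $\cS$ in an odd, hence positive, number of points; choosing $H$ with $\cS(H)<n/q$ via Lemma~\ref{lemma_small_hyperplane} gives $1\le\cS(H)<n/q$ and thus $n>q$. If $n$ is even, I project from a point $P\in\cS$: the multiset $\cS_P$ is $2$-divisible of cardinality $n-1$, and Lemma~\ref{lemma_decomposition_2_div} writes it as doubled points plus $2$-divisible sets $B_i$. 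Since $n-1$ is odd, some $B_i$ is non-empty with $\#B_i\le n-1<n$, and the induction hypothesis (no non-empty $2$-divisible set of size $\le q$) yields a contradiction. This settles $\#\cS\ge q+1$ without appealing to the Ball--Blokhuis bound quoted above.

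For $\#\cS=q+1$ (necessarily odd) I would show $\cS$ spans only a line. The key reduction is that if $\cS$ spans dimension $k\ge 4$ one can pick a point $R$ lying on none of the at most $\binom{q+1}{2}$ secant lines of $\cS$, such $R$ existing because these lines cover at most $\tfrac{q(q+1)^2}{2}<[4]_q\le[k]_q$ points; projecting from $R$ preserves $2$-divisibility, keeps $\cS_R$ a set of size $q+1$, and lowers the span to $k-1$. Iterating reduces to the planar case $\PG(2,q)$, where hyperplanes are lines, so $2$-divisibility means $\cS$ is of odd type: every line meets $\cS$ in an odd, hence positive, number of points. Writing $a_j$ for the number of lines with $\cS(\ell)=j$, the standard identities $\sum_j a_j=q^2+q+1$, $\sum_j j\,a_j=(q+1)^2$ and $\sum_j\binom{j}{2}a_j=\binom{q+1}{2}$ combine to $\sum_j(j-1)a_j=q$ and $\sum_j(j-1)^2a_j=q^2$. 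Because odd type excludes $j=0$, every occurring $j$ satisfies $0\le j-1\le q$, whence $\sum_j(j-1)^2a_j\le q\sum_j(j-1)a_j=q^2$ with equality; equality forces $j\in\{1,q+1\}$ and then exactly one full line, so $\cS$ equals that line. Since this contradicts spanning the plane, no $2$-divisible set of size $q+1$ spans $\PG(2,q)$, and the reduction then excludes every span dimension $\ge 3$; hence $\cS$ spans a line and, having $q+1$ points, is a full line.

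Finally, for $\#\cS=q+2$ (even) I would project from each $P\in\cS$: as in the even lower bound, $\cS_P$ has the odd cardinality $q+1$, the decomposition forces $\cS_P$ to be a single $2$-divisible set of size $q+1$, and by the previous paragraph this set is a line. Reading this back, every line through $P$ carries at most $2$ points of $\cS$ and all points of $\cS$ lie in one plane through $P$; letting $P$ vary shows that $\cS$ has no three collinear points and spans a plane, i.e.\ $\cS$ is a $(q+2)$-arc of $\PG(2,q)$, which for even $q$ is exactly a hyperoval. I expect the main obstacle to be the $\#\cS=q+1$ step: making the dimension reduction to the plane clean (the secant-covering count, and the preservation of both the \emph{set} and the divisibility properties under projection) and, in the plane, recognising that it is precisely oddness --- ruling out external lines $j=0$ --- that turns the two moment identities into the sharp inequality pinning $\cS$ down to a single line.
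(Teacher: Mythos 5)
Your proof is correct, and in the two places where the real work happens it takes a genuinely different route from the paper. For the lower bound in the odd-cardinality case the paper fixes a point of multiplicity zero and counts the $q+1$ lines through it in a plane, handling higher span dimensions by projecting through zero points down to $v=3$; your one-line argument via Lemma~\ref{lemma_small_hyperplane} --- every hyperplane has odd, hence positive, multiplicity, yet some hyperplane carries fewer than $n/q$ points --- is shorter and needs no case distinction on the dimension of the span. For the characterization at $\#\cS=q+1$ the paper argues that every line through a zero point has multiplicity exactly $1$, so any secant contains no zero point and is therefore entirely contained in $\cS$; your route (pick a point of the span off all secants, which exists since $\binom{q+1}{2}(q+1)<[4]_q$, project down to the plane, then apply the standard equations with the inequality $\sum_j(j-1)^2a_j\le q\sum_j(j-1)a_j$ forced into equality) is heavier but equally valid, and is essentially the classical ``odd type'' argument. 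The even-cardinality steps (the bound for even $\#\cS$ and the hyperoval characterization at $\#\cS=q+2$) proceed in both proofs by the same device: project from a point of multiplicity one, strip double points via Lemma~\ref{lemma_decomposition_2_div}, and invoke the odd case respectively the line characterization; here you are in fact slightly more explicit than the paper in verifying that no double points can occur in $\cS_P$ when $\#\cS=q+2$, so that $\cS_P$ really is a set to which the $q+1$ case applies.
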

\begin{proof}
  First we consider the case $\#\cS\equiv 1\pmod 2$ and denote by $Q$ an arbitrary point with multiplicity $0$. (If there is no point of multiplicity zero then we have $\#\cS=\left(q^v-1\right)/(q-1)
  \ge q+1$ and in the case of equality $\cS$ is the characteristic function of a line.) Since $\chi(Q)\not\equiv 1\equiv \#\cS$ we have $v\ge 3$. If $v=3$ then each of the $q+1$ lines through $Q$ has 
  multiplicity at least $1$, so that $\#\cS \ge q+1$. In the case of equality each line $L'$ that contains a point of multiplicity zero satisfies $\cM(L')=1$, so that the line $L$ spanned by 
  $2\le q+1$ points of multiplicity $1$ contains $q+1$ points of multiplicity $1$, i.e., $\cS=\chi_L$. In the following we assume that $\cS$ is spanning, $v\ge 4$, and $\#\cS\le q+1$. Let $L$ be a line 
  with at least two points of multiplicity one and a point $Q$ with multiplicity $0$. Then, $\cS_Q$ is a $2$-divisible multiset with cardinality $q+1$ and at least one point $P$ of multiplicity at 
  least two. By iteratively projecting through points of multiplicity zero we can assume that the ambient space of $\cS_Q$ is three-dimensional. Setting $\cM=\cS_Q-2\cdot\chi_P$ gives a $2$-divisible
  multiset of points with cardinality $\le q-1$ in $\PG(2,q)$. Iteratively removing double points yields a $2$-divisible set $\cS'$ of points with $\#\cS'\le q-1$ and $\#\cS'\equiv 1\pmod 2$, which 
  is impossible as we have seen before. 
  
  Next we consider the case $\#\cS\equiv 0\pmod 2$. Since $v=2$ would imply $\cS(P)\equiv\#\cS\equiv 0\pmod 2$ and $\#\cS=0$, we can assume $v\ge 3$. If $v=3$, then let $P$ be a point of multiplicity $1$. 
  $2$-divisibility implies that the $q+1$ points through $P$ have multiplicity at least $2$, so that $\#\cS\ge q+2$. In the case of equality each line through $P$ has multiplicity $2$, i.e., the line
  multiplicities are contained in $\{0,2\}$ and $\cS$ is the characteristic function of a hyperoval. If $v\ge 4$ and $\cS$ is spanning, then we consider a point $Q$ of multiplicity $1$, so that 
  the projection $\cS_Q$ through $Q$ is a $2$-divisible multiset of odd cardinality $\#\cS-1$. Thus, the previous part implies $\#\cS\ge q+2$. In the case of equality we have 
  $\#\cS_Q=q+1$ and $\cS_Q$ has to be the characteristic function of a line, which contradicts $v\ge 4$ for $\cS$ spanning.   
\end{proof}

The $2$-divisible sets over $\F_2$ with cardinality at most $14$ have been computationally classified in \cite{ubt_eref40887}. A purely theoretical argumentation for cardinalities up to seven can
e.g.\ be found in \cite{korner2023lengths}. For $q=4$ a $2$-divisible set of $q+3=7$ points exists in $\PG(2,4)$, see family RT1 in \cite{calderbank1986geometry}. With a little bit of effort one 
can show that this is the unique possibility of a $2$-divisible set with cardinality $7$.

\begin{proposition}
  \label{prop_no_q+3}
  For even $q>4$ no $2$-divisible set $\cS$ in $\PG(v-1,q)$ of cardinality $q+3$ exists.
\end{proposition}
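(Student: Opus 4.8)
The plan is to reduce to the planar case $v=3$ and there derive a numerical contradiction. Since $q$ is even, $\#\cS=q+3$ is odd, so $2$-divisibility forces $\cS(H)\equiv q+3\equiv 1\pmod 2$ for every hyperplane $H$; in $\PG(2,q)$ this says every line meets $\cS$ in an odd number of points. First I would dispose of low dimensions: if $v\le 2$ then $\#\cS\le q+1<q+3$, so $\cS$ must span at least a plane and we may take $v\ge 3$. To descend to $v=3$ when $v\ge 4$, I would project. Choosing a point $Q$ with $\cS(Q)=0$ that lies on no secant of $\cS$ (no line through two points of $\cS$), the projection $\cS_Q$ is again a $2$-divisible set of cardinality $q+3$ in $\PG(v-2,q)$: projecting through a point of multiplicity $0$ preserves both the cardinality and $2$-divisibility, and lying off every secant guarantees that distinct points of $\cS$ map to distinct points, so no image multiplicity exceeds $1$. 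Such a $Q$ exists because the points lying on some secant number at most $\binom{q+3}{2}(q+1)=\tfrac{1}{2}(q^3+6q^2+11q+6)$, which is strictly less than $[4]_q=q^3+q^2+q+1\le[v]_q$ for all even $q\ge 8$. Iterating brings us to a $2$-divisible set of $q+3$ points in $\PG(2,q)$.

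In $\PG(2,q)$ the crux is a double count of external points (points $P$ with $\cS(P)=0$) against $3$-secants. For a fixed external point $P$, the $q+1$ lines through $P$ partition $\cS$, each meets $\cS$ in an odd number $\ge 1$, and these numbers sum to $q+3$. Writing the value on the $i$-th line as $1+2a_i$ with $a_i\ge 0$ yields $\sum_i a_i=1$, so through every external point there pass exactly one $3$-secant and $q$ tangents; in particular no line of measure $\ge 5$ passes through an external point. Hence each external point lies on exactly one $3$-secant.

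Now I would count the incidences between the $q^2-2$ external points (there are $(q^2+q+1)-(q+3)$ of them) and the $3$-secants. From the external-point side each contributes exactly one incidence, giving $q^2-2$ in total; from the $3$-secant side each $3$-secant carries exactly $q+1-3=q-2$ external points, giving $t_3(q-2)$, where $t_3$ is the number of $3$-secants. Equating yields $t_3(q-2)=q^2-2$, so $t_3=q+2+\tfrac{2}{q-2}$, which is an integer only if $q-2$ divides $2$, i.e.\ $q\in\{3,4\}$; for even $q>4$ (so $q\ge 8$) this fails, a contradiction. This rules out a planar example, and by the reduction none exists in any dimension. I expect the planar count to be clean and the condition $q>4$ to surface precisely in the divisibility $(q-2)\mid 2$; the main obstacle is the projection bookkeeping of the reduction, namely verifying that an off-secant point of multiplicity $0$ always exists and that projecting through it keeps $\cS_Q$ a genuine set of the same cardinality.
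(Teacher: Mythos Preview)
Your argument is correct, and it actually fills a gap that the paper's proof leaves open: the paper treats only the case $v=3$ (beginning ``First we assume the case $v=3$'' and never returning to higher dimensions), whereas you supply an explicit reduction by projecting through a point off every secant. Your counting bound $\binom{q+3}{2}(q+1)<[4]_q$ for even $q\ge 8$ is exactly what is needed to guarantee such a point, and the projection indeed preserves $2$-divisibility, cardinality, and the property of being a set.

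In the planar step the two arguments diverge. The paper first rules out a full line in $\cS$ (else $\cS-\chi_L$ would be a $2$-divisible set of size $2$) and lines of measure $\ge 5$ (by projecting through an external point on such a line and invoking Proposition~\ref{prop_lb_2_div}), concluding that every line meets $\cS$ in $1$ or $3$ points; the ``standard equations'' $t_1+t_3=q^2+q+1$, $t_1+3t_3=(q+3)(q+1)$, $3t_3=\binom{q+3}{2}$ then force $q=4$. Your route is more direct: the parity constraint alone forces every pencil through an external point to consist of $q$ tangents and one $3$-secant, and the double count $t_3(q-2)=q^2-2$ yields the same obstruction $(q-2)\mid 2$ without first classifying all line types and without appealing to Proposition~\ref{prop_lb_2_div}. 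Note that you never need to exclude a full line separately: a full line would itself force a line of even measure (take the line through one of the two residual points and any point of the full line not on their join), already contradicting $2$-divisibility, so your pencil analysis is globally valid. The paper's approach is slightly more structural (it establishes the full intersection spectrum $\{1,3\}$), while yours is more self-contained and handles all dimensions.
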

\begin{proof}
  Clearly we have $v\ge 3$. First we assume the case $v=3$, so that all lines have odd multiplicity. If the support of $\cS$ contains a line $L$, then 
  $\cS-\chi_L$ would be a $2$-divisible set of cardinality $2$, which is impossible. If there is a line $L$ with multiplicity at least $5$, then $L$ contains 
  a point $Q$ with multiplicity $\cS(Q)=0$ and the projection $\cS_Q$ would be $2$-divisible and contains a point of multiplicity at least $4$. Iteratively removing double 
  points would yield a $2$-divisible set of cardinality at most $q-1$ which is impossible. Thus, all lines have multiplicity $1$ or $3$, so that the standard equations yield a contradiction if 
  $q\neq 4$. 
\end{proof}

Proposition~\ref{prop_lb_2_div}, Proposition~\ref{prop_no_q+3}, Lemma~\ref{lemma_decomposition_2_div}, and our stated base examples yield:
\begin{proposition}
  \label{prop_2_div_q_8_projective} 
  A $2$-divisible set $\cS\neq\emptyset$ of cardinality $n$ over $\F_8$ exists iff and only if $n\in\{9,10\}\cup\N_{\ge 12}$.  
\end{proposition}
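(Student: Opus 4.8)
The plan is to establish the two directions separately, checking the realizable cardinalities against the excluded set $\{1,\dots,8,11\}$. For the ``only if'' direction I would use the two structural results already available. Proposition~\ref{prop_lb_2_div} shows that every non-empty $2$-divisible set over $\F_8$ has cardinality at least $q+1=9$, which eliminates $n\in\{1,\dots,8\}$ at once. Since $q=8>4$ is even, Proposition~\ref{prop_no_q+3} then eliminates the single remaining value $n=q+3=11$. Hence no $2$-divisible set can have a cardinality outside $\{9,10\}\cup\N_{\ge 12}$.

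For the ``if'' direction I would assemble the realizable cardinalities from the stated base examples by a numerical-semigroup argument. The base examples supply $2$-divisible sets of cardinalities $9$ (a line), $10$ (a hyperoval), $16$ (via Lemma~\ref{lemma_two_lines}), and $12,13,14,15,17$ (the computer-found projective codes), so the base cardinalities form the list $\{9,10,12,13,14,15,16,17\}$. The crucial closure property is additivity: given $2$-divisible sets $B_1,B_2$, I would embed them by construction~(1) into skew subspaces of a common, sufficiently large $\PG(v-1,8)$, so that their supports become disjoint; then the multiset sum $B_1+B_2$ is again a genuine set and is $2$-divisible by construction~(2). Lemma~\ref{lemma_decomposition_2_div} records the same phenomenon from the structural side, namely that $2$-divisible sets are exactly the building blocks once the $2$-fold points (which are forbidden in the set regime) are stripped away. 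Consequently the realizable cardinalities form the numerical semigroup generated by the base list.

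It then remains to verify that this semigroup equals $\{9,10\}\cup\N_{\ge 12}$. The values $12,\dots,17$ are base cardinalities, while $18=9+9$, $19=9+10$ and $20=10+10$; thus all nine consecutive integers $12,\dots,20$ are realizable. Writing any $n\ge 12$ as $n=9k+r$ with $r\in\{12,\dots,20\}$ then expresses it as a disjoint union of $k$ lines and one base example, so every $n\ge 12$ is realizable, as are $9$ and $10$ directly. Conversely the semigroup omits $\{1,\dots,8,11\}$, since each generator is at least $9$ and every proper sum is at least $18$; this matches the gaps excluded above.

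I expect the genuinely hard content to sit entirely inside the results I am permitted to assume, rather than in the assembly. Concretely, producing the sporadic base examples for $n\in\{12,13,14,15,17\}$ (which required a computer search) and proving the non-existence of a cardinality-$11$ set (Proposition~\ref{prop_no_q+3}) are the delicate inputs; granted those, the skew-embedding construction and the nine-consecutive-integers argument are routine, so the main obstacle lies in trusting and invoking those base and non-existence results rather than in the final combinatorial synthesis.
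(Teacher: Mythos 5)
Your proposal is correct and follows essentially the same route as the paper, which states the proposition as a direct consequence of Proposition~\ref{prop_lb_2_div} (excluding $n\le 8$), Proposition~\ref{prop_no_q+3} (excluding $n=11$), and the listed base examples of cardinalities $9,10,12,\dots,17$ combined by disjoint (skew-embedded) unions. Your explicit nine-consecutive-integers argument and the remark that sums of two generators are at least $18$ merely spell out details the paper leaves implicit.
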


A $(q + t,t)$-arc of type $(0, 2,t)$ in $\PG(2, q)$, also called KM-arc, see \cite{korchmaros1990}, is a set $\cS$ of $q + t$ points such that every line meets $\cS$ in either $0$, $2$ or $t$ points.

\begin{proposition}
  Let $\cS$ be a $2$-divisible set in $\PG(v-1,q)$ of cardinality $q+4$. If $q>4$ is even, then $\cS$ is a KM-arc of type $(0,2,4)$.
\end{proposition}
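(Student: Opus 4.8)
The plan is to show first that $\cS$ necessarily lives in a plane and then to classify it there. Since $q$ is even, $\#\cS=q+4$ is even, so $2$-divisibility forces every hyperplane to meet $\cS$ in an even number of points. The heart of the matter is the planar case, where hyperplanes are lines; I would deal with the reduction to $v=3$ separately and combine the two at the end.

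For the planar case ($v=3$) I would fix a point $P\in\cS$ and examine the pencil of $q+1$ lines through $P$. Each such line contains $P$, hence has positive even multiplicity, i.e.\ multiplicity at least $2$; moreover these lines partition the remaining $q+3$ points of $\cS$, so $\sum_{L\ni P}\cS(L)=(q+3)+(q+1)=2q+4$. Subtracting the baseline $2(q+1)$ leaves a total even ``excess'' of exactly $2$, which can only be realised by one line of multiplicity $4$ and $q$ lines of multiplicity $2$ through $P$. In particular no line through a point of $\cS$ has multiplicity at least $6$, and since every line of positive multiplicity passes through a point of $\cS$, every line meets $\cS$ in $0$, $2$ or $4$ points. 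Together with $\#\cS=q+4$ this is precisely the definition of a KM-arc of type $(0,2,4)$.

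For the reduction to $v=3$ I would project from a point $Q\in\cS$, obtaining a $2$-divisible multiset $\cS_Q$ of cardinality $q+3$. By Lemma~\ref{lemma_decomposition_2_div} write $\cS_Q=\sum 2\chi_{P_i}+\sum\chi_{B_j}$ with each nonempty $B_j$ a $2$-divisible set; since $q+3$ is odd and $q+3<2(q+1)$, Proposition~\ref{prop_lb_2_div} leaves exactly one nonempty $B_j$, of odd size in $\{q+1,q+3\}$, and $q+3$ is excluded by Proposition~\ref{prop_no_q+3} (this is where $q>4$ enters). Hence that set is a line and only cardinality $2$ remains, giving $\cS_Q=\chi_L+2\chi_P$, whose support spans at most a plane, so $v\le 4$. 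To rule out $v=4$ I would note that there $P\notin L$, and lifting back through $Q$ shows $\cS$ consists of $q+2$ points in a plane $\Pi\ni Q$ — namely $Q$ together with one point $P_R$ on each of the $q+1$ lines through $Q$ in $\Pi$ — plus two points $A,B$ on a line through $Q$ transversal to $\Pi$. Writing $T=\{P_R\}$, for any line $m\subset\Pi$ with $Q\notin m$ the plane $\langle A,m\rangle$ contains $A$ but not $B,Q$, forcing $|T\cap m|$ odd, whereas any plane through $m$ avoiding $A,B,Q$ forces $|T\cap m|$ even; such a plane exists since only three of the $q+1$ planes through $m$ can contain $A$, $B$ or $Q$ and $q\ge 8$. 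This contradiction excludes $v=4$, while $v\le 2$ is impossible for cardinality reasons, so $v=3$.

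The main obstacle is precisely this exclusion of $v=4$: the planar count is short and robust, whereas showing that the two ``extra'' points cannot escape the plane requires the parity comparison, for the same trace line $m$, between a plane through $A$ and a generic plane through $m$. This is exactly the step that consumes the hypothesis $q>4$, both through Proposition~\ref{prop_no_q+3} in pinning down $\cS_Q$ and through the availability of enough planes through $m$.
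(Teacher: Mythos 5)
Your proof is correct and follows essentially the same route as the paper: project through a point of $\cS$, use Lemma~\ref{lemma_decomposition_2_div}, Proposition~\ref{prop_lb_2_div}, and Proposition~\ref{prop_no_q+3} to force $\cS_Q=\chi_L+2\cdot\chi_P$, and then eliminate the case $P\not\le L$ (i.e.\ $v=4$) by producing a hyperplane of odd multiplicity. Your two minor deviations --- the direct pencil count in the planar case, and the parity comparison of two planes through a common line $m$ in place of the paper's single plane $\langle P',L'\rangle$ of multiplicity $3$ --- are both sound and do not change the substance of the argument.
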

\begin{proof}
  W.l.o.g.\ we assume that $\cS$ is spanning. 
  For an arbitrary point $Q$ of multiplicity $\cS(Q)=1$ consider the projection $\cS_Q$ through $Q$, which is a $2$-divisible multiset $\cM$ of cardinality $q+3$. From 
  Lemma~\ref{lemma_decomposition_2_div}, Proposition~\ref{prop_lb_2_div}, and Proposition~\ref{prop_no_q+3} we conclude 
  $\cM=\chi_L+2\cdot\chi_P$ for some line $L$ and some point $P$. If $P\le L$, then we have $v=3$, so that we are in this situation for all points $Q$ with multiplicity $\cS(Q)=1$. Moreover the line 
  multiplicities are contained in $\{0,2,4\}$, so that the statement holds.
  
  Otherwise we have $P\not\le L$ and this is the case for all points $Q$ with multiplicity $\cS(Q)=1$. Thus, we have $v=4$ and the line multiplicities are contained in $\{0,1,2,3\}$. Moreover, the preimage 
  of $L$ is a hyperplane $H$ of cardinality $q+2$ and the other two points outside of $H$ form a line $L$ meeting $Q$. (Actually, $L$ is the unique $3$-line for $Q$.) Now choose a $2$-line $L'$ in $H$ that 
  does not contain $Q$ and a point $P'\neq Q$ on $L$ with multiplicity $\cS(P')=1$. Then, the hyperplane $H':=\langle P',L'\rangle$ has multiplicity $3$, since all points with positive multiplicity 
  are contained in either $H$ or $L$ -- contradiction.  
\end{proof}



\newcommand{\etalchar}[1]{$^{#1}$}

\end{document}